\documentclass[10pt]{amsart}
\usepackage[utf8]{inputenc}
\usepackage{amssymb}
\usepackage{amsfonts, color}
\usepackage{graphicx}
\usepackage{amsmath,amsthm}%
\usepackage[normalem]{ulem}
\setcounter{MaxMatrixCols}{30}
\providecommand{\U}[1]{\protect\rule{.1in}{.1in}}
\newtheorem{theorem}{Theorem}[section]

\newtheorem{corollary}[theorem]{Corollary}

\newtheorem{lemma}[theorem]{Lemma}

\newtheorem{proposition}[theorem]{Proposition}

\newtheorem{remark}[theorem]{Remark}

\newcommand{\R}{\mathbb{R}}
\newcommand{\rr}{\mathbb{R}}
\renewcommand{\H}{\mathbb{H}}
\newcommand{\N}{\mathbb{N}}
\newcommand{\dive}{{\rm div\, }}
\newcommand{\grad}{{\rm grad\, }}
\newcommand{\arctanh}{{\rm arctanh\, }}

\newcommand{\Om}{\Omega_m}

\usepackage{cancel}
\usepackage{enumitem}  
\makeatletter
\providecommand{\@LN}[2]{}
\makeatother

\begin{document}
	\title[CMC graphs in $\H^{n+1}$ defined in exterior domains]{Constant mean curvature graphs in $\H^{n+1}$ defined in exterior domains}
	\author{Patricia Klaser,\, Adilson Nunes \and Jaime Ripoll }
	
	\begin{abstract}
		Given $H\in [0,1)$ and given a $C^0$ exterior domain $\Omega$ in a $H-$hyper\-sphere of $\H^{n+1},$ the existence of hyperbolic Killing graphs of CMC $H$ defined in $\overline{\Omega}$ with boundary $ \partial \Omega $ included in the $H-$hypersphere is obtained.
	\end{abstract}
	
\maketitle

{\bf MSC Class:} Primary 53A10. Secondary 53C42.	
	
{\bf Keywords:} Constant Mean Curvature, Killing graph, hyperbolic space, exterior domain

	\section{Introduction}

In this paper we investigate the existence of solutions to the exterior
Dirichlet problem (EDP) for the constant mean curvature (CMC) equation, with vanishing boundary data, in the hyperbolic space $\H^{n+1},$ for hyperbolic Killing graphs (that is, for hypersurfaces which can be represented as Euclidean
radial graphs centered at the origin in the half space model of $\H^{n+1}$). We
recall that a Dirichlet problem is called exterior when it is defined on a domain which complement is compact.

The exterior Dirichlet problem for the minimal surface equation was first
considered in the Euclidean space, seemingly by Nitsche \cite{Ni}, who proved
that if $u$ is a solution to an exterior Dirichlet problem over a domain in
the Euclidean plane, then it has at most linear growth and its graph has a
well defined Gauss map at infinity. This problem has also been investigated,
considering diferent aspects (existence, uniqueness, behaviour at infinity and
foliations of open subsets of the ambient space by solutions) by P. Collin and
R. Krust \cite{CK}, E. Kuwert \cite{Ku}, Kutev and Tomi \cite{KT}, the third author and Tomi  \cite{RT}, and
more recently by Aiolfi, Bustos and the third author \cite{ABR} all in the Euclidean space $\rr^n$
for minimal hypersurfaces. In \cite{ABR}, a brief survey on previous results about the EDP
for the minimal hypersurface equation in $\rr^n$ is presented.

In \cite{GS}, Guan and Spruck proved a result that contrasts with Bernstein theorem in $\rr^3$: the existence of entire non trivial solutions for CMC
$0\leq H<1$ hyperbolic Killing graphs in $\H^{n+1}$; moreover, the solutions can assume
any prescribed boundary data at infinity.

Motivated by the Euclidean history and by the pioneer work of Guan-
Spruck on CMC entire radial graphs in the hyperbolic space, we investigate here
the EDP for hyperbolic graphs in the hyperbolic space  $\H^{n+1}$. We first observe that
these graphs can be defined in a coordinate free form as hyperbolic Killing graphs
as follows.

Let $X$ be a hyperbolic Killing vector field of $\H^{n+1}$ that is, the integral
curves of $X$ are hypercycles orthogonal to a totally geodesic hypersurface,
say $\H^{n}$ of $\H^{n+1}$. Denote by $\{\varphi_t\}_{t\in \rr},$ $\varphi_0 = Id_{\H^{n+1}},$ the flow of $X$, a one-parameter subgroup of isometries of $\H^{n+1}.$ Given a domain
$\Omega\subset \H^n,$ the
$X-$(Killing) graph ${\rm Gr}(u)$ of a function $u$ defined in $\Omega$ is
$${\rm Gr}(u) = \left\{\varphi_{u(x)} (x)\, |\, x\in \Omega\right\}.$$

	\begin{theorem}\label{teo-H=0}
	Let $X$ be a hyperbolic Killing field of $\H^{n+1}$ orthogonal to a totally geodesic hypersurface $\H^n.$
		Let $\Omega \subset \mathbb{H}^n$ be a $C^0$ exterior domain, that is, $\H^n\backslash \Omega$ is compact. Then, given $s\ge 0,$ there is $u\in C^\infty(\Omega)\cap C^0(\overline{\Omega}),$ such that:
		\begin{enumerate}[label=(\roman*)]
			\item the $X-$graph of $u$ is a minimal hypersurface of $\H^{n+1},$
			\item $u\ge 0$ in $\Omega$ and $u=0$ on $\partial\Omega,$ \label{it-2}
			\item $\displaystyle \limsup_{x\to \partial\Omega} |\grad u|=s,$ \label{it-3}
			\item $\displaystyle \sup_{\Omega} |u|\leq \int_{0}^{+\infty}\frac{e^{-2t}}{\cosh(t)\sqrt{1-e^{-4t}}}dt=B(0)<0.712.$
		\end{enumerate}	
	\end{theorem}

The exact value of $B(0)$ is given in \eqref{eq-c0}.

A question that comes up is if one can drop out conditions \ref{it-2} and \ref{it-3} in
Theorem \ref{teo-H=0} and prescribe, instead, the values of $u$ at $\partial \Omega$ and at $\partial_\infty\H^n.$ As to
$\partial\Omega$ we recall that, in the Euclidean space, R. Osserman proved the existence
of a boundary data on the disk $D\subset \rr^2$ for which the EDP in $\rr^2\backslash D$ for the minimal surface equation has no bounded solution \cite{O}. Although not having a similar
example here, we do believe that, due to the non mean convexity of $\Omega,$ such
phenomenon also holds for the EDP of a disk in $\H^2$ (irrespective of the
values of the solution at infinity).

As to $\partial_\infty\H^n,$ considering the results of Guan and Spruck \cite{GS}, and that
the continuous extension to $\partial_\infty\H^n$ of a prospective solution depends on the existence of barriers at infinity, which are local in nature, we could expect to be able to prescribe a boundary data at infinity for the EDP.
However, this is far from being true. Indeed, the zero boundary data of the solution
at $\partial\Omega$ imposes a strong restriction on its height. For instance, there are no minimal hyperbolic Killing graphs on the complement of the unit disk $D$ centered at the origin of $\H^n$ which vanish at $\partial D$ and which are a constant $C$ at the asymptotic boundary of $\H^n$ if $C \ge B(0)$ (see Remark \ref{rem-naoexist}).

This is the reason we prescribed, instead, the extremal inclination of
the solution at the boundary of the domain, as in \cite{RT,ESR}. Nevertheless, we
highlight the open problem of understanding the set of admissible asymptotic
boundary data, i. e., the data at infinity for which the EDP has at least one solution.

As one can observe in the proof of our main results, the idea of prescribing
the extremal inclination of the solution at the boundary of the domain has
a physical motivation: One can imagine two wires in $\overline{\H}^3$, one fixed at $\partial\Omega \subset \H^2$ and the other at the asymptotic boundary of $\H^2.$ In this
configuration, there is a minimal surface, namely the domain $\Omega,$ bounded
by these wires. We can then move upwards the out wire until its maximal
inclination at $\partial \Omega$ is reached.

Another problem we deal with, also motivated by the result of Guan and
Spruck in \cite{GS}, is if it is possible to extend Theorem \ref{teo-H=0} to the EDP for constant mean curvature (bigger than or equal to 0 and strictly less than 1) Killing
graphs: The answer is yes once we formulate the EDP on on another but also
natural way.

Let $\H^n$ be a totally geodesic hypersurface of $\H^{n+1}$ and $X$ a hyperbolic Killing vector field orthogonal to it: Given $H\in [0, 1),$ set %$t\geq 0$ be such that $\tanh(t)=H.$
$$E^n_H:=\{p\in \H^{n+1}\,|\, d(p, \H^n ) = -\arctanh(H) \},$$
where $d$ is the Riemannian oriented distance in $\H^{n+1}$ ($d(p, \H^n )> 0$ if $p$ lies in the side of $\H^{n+1}\backslash \H^n$ to which the field $X$ points to). One may see that $E^n_H$ is a totally umbilical hypersurface of constant mean curvature $H$, if oriented in the direction of $X$. With the induced metric of $\H^{n+1},$ $E^n_H$ is isometric to a hyperbolic space with some negative sectional curvature $k\in [-1, 0).$ 

We may introduce the notion of $X-$graphs over $E^n_H$ as before, in such a way that a graph over an exterior domain in $E_H$ corresponds to a graph over an exterior domain in $\H^n,$ and vice-versa. Then we prove Theorem \ref{teoHneq0} which summarizes as Theorem \ref{teoHneq0-intro} and is illustrated in Figure \ref{fig:dominioEH}.

\begin{figure}[h!]
			\centering
			\includegraphics[width=0.6\linewidth]{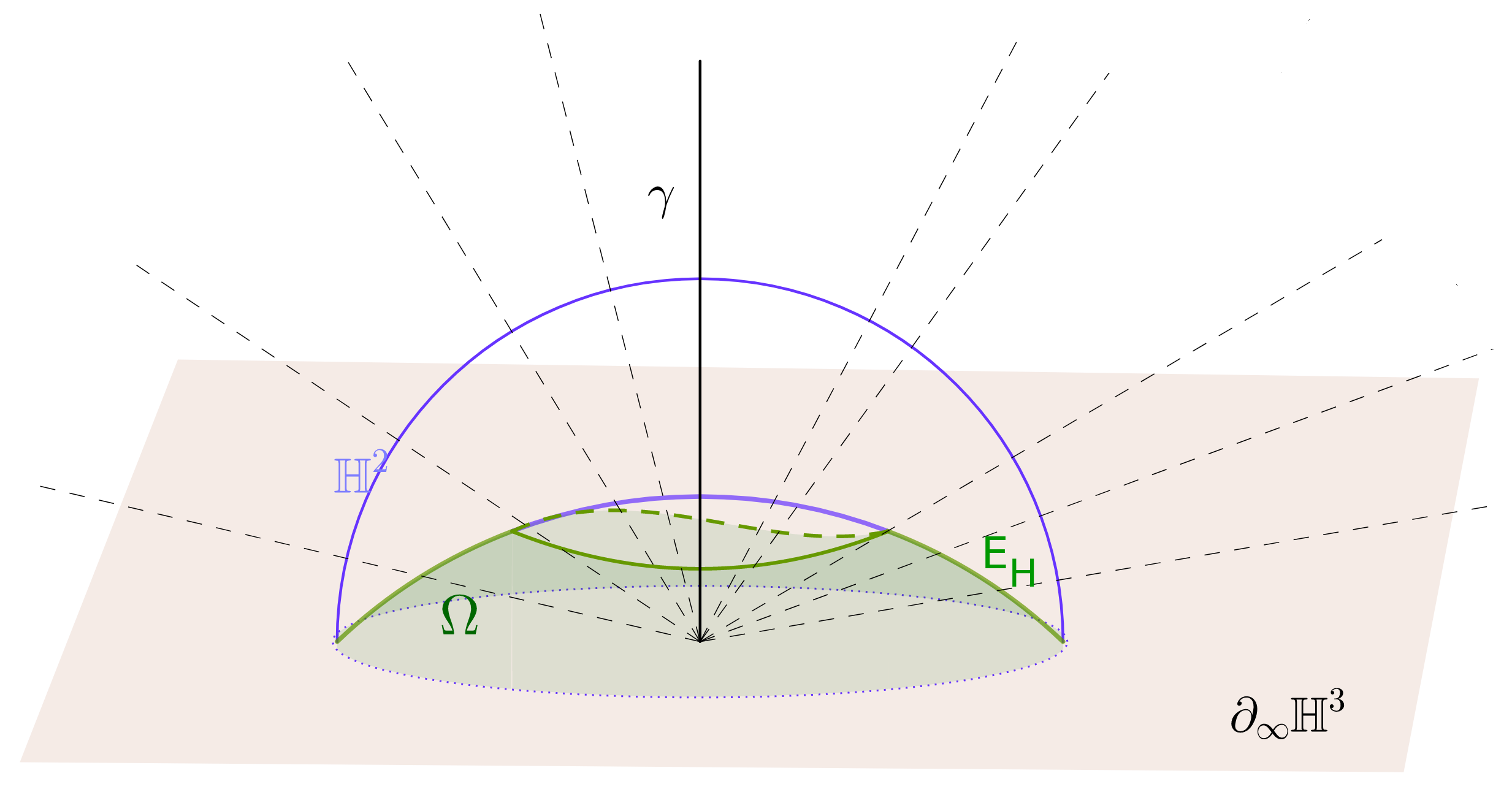}
			\caption{Domain $\Omega$ in $E_H$}
			\label{fig:dominioEH}
		\end{figure}

	\begin{theorem}\label{teoHneq0-intro}
		For $H\in (0,1),$ let $\Omega \subset E^n_H$ be a $C^0$ exterior domain. Then, for any $s\ge 0$ there is an $X-$graph of CMC $H$ given by a positive bounded function over $\Omega,$ with vanishing boundary data and $$\limsup_{x\to \partial\Omega} |\grad u|=s.$$
	\end{theorem}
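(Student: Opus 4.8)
The plan is to follow the exhaustion-and-barrier scheme already used for Theorem~\ref{teo-H=0}, the essential new ingredient being a family of rotationally invariant CMC~$H$ graphs over the hypersphere $E_H$ which play the role that the rotational profiles played in the minimal case. I begin by writing the CMC~$H$ equation for $X$-Killing graphs with domain in $E_H$ as a quasilinear elliptic operator in divergence form, adopting the orientation convention $\la \eta, X\ra \ge 0$ so that $H>0$ pushes the graph to the $+X$ side of $E_H$. With this convention the zero function, whose graph is $E_H$ itself, is an exact solution; comparison with it on each approximating domain, where the competing solution is nonnegative on the outer boundary, then yields $u\ge 0$ through the maximum principle.

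The heart of the argument is the radial barrier. Using the rotational symmetry of $E_H$ about a geodesic through a chosen center, I reduce the CMC~$H$ equation to a first-order ODE for a profile $w=w(r)$, where $r$ denotes intrinsic distance in $E_H$, on an exterior ray $r\ge r_0$. Prescribing the initial slope $w'(r_0)=s$, I expect a positive increasing solution defined for all $r\ge r_0$; the decisive point is that it stays bounded as $r\to\infty$, giving the analogue of estimate~(iv) of Theorem~\ref{teo-H=0}. This boundedness should follow from an explicit first integral of the ODE, in the spirit of that integral bound, and I expect it to be the main obstacle: for $H\in(0,1)$ the first integral is more involved than in the minimal case, and one must use $H<1$ to keep the angle function away from the vertical, so that no blow-up occurs at finite distance and the height integral converges.

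With the barrier available I exhaust $\Omega$ by bounded domains $\Om$ whose outer boundary $\Gm\subset E_H$ is a geodesic sphere of radius $m$, and on each $\Om$ I solve the Dirichlet problem for CMC~$H$ with vanishing data on $\partial\Omega$ and with the trace of the radial barrier prescribed on $\Gm$. Solvability on the bounded domains follows from standard theory once boundary gradient estimates hold; these are furnished by the radial barriers, used as upper and lower comparison functions near $\partial\Omega$. Since the comparison functions are the smooth rotational graphs rather than $\partial\Omega$ itself, the hypothesis on $\Omega$ can be kept to merely $C^0$. The same barriers simultaneously yield a uniform height bound $0\le u_m\le C$, with $C$ the finite limiting height found in the second step, and interior gradient estimates for the CMC operator then give uniform $C^1$ control on compact subsets of $\Omega$.

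Finally I pass to the limit. By the uniform height and interior gradient bounds, together with Schauder estimates, a subsequence of the $u_m$ converges in $C^\infty_{loc}(\Omega)\cap C^0(\overline{\Omega})$ to a bounded function $u$ whose $X$-graph has CMC~$H$, with $u\ge 0$ and $u=0$ on $\partial\Omega$. The prescribed boundary behavior $\limsup_{x\to\partial\Omega}|\grad u|=s$ is obtained by comparing $u$ near $\partial\Omega$ with radial barriers of boundary slope $s$: those lying below $u$ force the limit superior to be at least $s$, those lying above force it to be at most $s$, and the degenerate case $s=0$ is recovered by flattening the outer data. The single genuinely delicate estimate, and the one place where $H<1$ is indispensable, remains the boundedness of the radial profile established in the second step.
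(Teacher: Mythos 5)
Your overall frame (exhaustion by annular domains plus rotational CMC $H$ barriers over $E_H$) matches the paper's, but the mechanism you propose for the key conclusion $\limsup_{x\to\partial\Omega}|\grad u|=s$ has a genuine gap. You fix the outer Dirichlet data on $\Gm$ once and for all (the trace of a radial barrier) and then try to pin the boundary gradient by squeezing $u$ between radial graphs of slope $s$ near $\partial\Omega$. The lower comparison only yields $|\grad u|\ge s$ at the points where $\partial\Omega$ touches the smallest enclosing circle $\partial D_0$, while the upper bound $|\grad u|\le s$ would require, at \emph{every} point of $\partial\Omega$, a rotational graph lying above $u$ and vanishing at that point, i.e.\ an exterior circle condition --- precisely the hypothesis the theorem is designed to drop by allowing merely $C^0$ exterior domains. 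Nothing in your scheme prevents $|\grad u|>s$ somewhere on $\partial\Omega$, nor calibrates the limsup to the exact value $s$. The paper's argument is structurally different at this point: for each $m$ it considers the set $T_m$ of heights $t$ on $\Gm$ for which a CMC $H$ solution exists \emph{with the constraint} $\sup_{\partial\Omega}|\grad u_t|\le s$ built into the definition; it shows $T_m$ is bounded by $B(H)$ via a first interior contact with the family $f_R$ of Lemma \ref{lem-radiaisHneq0}, that $h(m)=\sup T_m$ belongs to $T_m$ by compactness (the gradient estimate of Lemma \ref{lem-gradest}, transported between $E_H$ and $\H^2$ by Proposition \ref{graphundergraph}), and finally gets the exact equality by the Implicit Function Theorem: if $\sup_{\partial\Omega}|\grad u_{h(m)}|<s$ held, the solution could be continued to slightly larger boundary data still satisfying the constraint, contradicting maximality of $h(m)$. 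This maximization-plus-IFT step is the missing idea in your proposal; without it (or an equivalent device) the exact prescription of $s$ does not follow for $C^0$ domains.

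Two secondary points. First, you leave the barrier construction --- existence and boundedness of the radial profile --- as an expectation to be verified by a first integral; the paper does not solve an ODE intrinsically on $E_H$ at all, but instead solves the rotational problem over the totally geodesic $\H^2$ (Proposition \ref{rad-sim}, with the explicit bound $B(H)$ of Lemma \ref{lem-radiais}) and then translates those graphs along $X$ so their boundaries lie on $E_H$ (Lemma \ref{lem-radiaisHneq0}); this translation costs an extra estimate, $\hat{w}(o)\le B(H)$ for the height function $\hat{w}$ of Remark \ref{rmk-w}, and the transfer of tangency and gradient decay rests on the correspondence of Proposition \ref{graphundergraph} and Corollary \ref{cor-uew}, which your outline does not supply. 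Second, solving the Dirichlet problem directly on a $C^0$ domain $\Om$ is itself delicate: the paper first proves the theorem for smooth $\Omega$ and then approximates a $C^0$ exterior domain from outside by smooth domains $U_j$ with uniform height and gradient bounds, concluding by Arzel\`a--Ascoli; and your remark that $s=0$ is ``recovered by flattening the outer data'' produces only the trivial solution $u\equiv 0$, not a positive one.
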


In \cite{Kr}, \cite{Ku}, \cite{RT} and \cite{ABR} the authors investigate the existence of foliations
associated to the EDP in $\rr^n.$ Foliations of open sets of the ambient spaces
by $H-$surfaces play an important role in the study of complete $H-$surfaces.
A natural continuation of our results is to investigate if some of the
results of these previous papers can be extended to the EDP for hyperbolic
CMC graphs. This is a problem which the authors intend to undertake.

\section{Rotational surfaces}\label{sec-rot}
In the particular case of $\Omega$ being the exterior of a ball centered at the geodesic left invariant by the flux of the hyperbolic Killing field, the EDP reduces to solving an ODE and the solutions correspond to part of the rotational surfaces introduced by do Carmo and Dacjzer in \cite{dCD}. This pieces of rotational surfaces will be used as barriers in the proofs of our main results.

Let $\gamma $ be the  geodesic associated to the hyperbolic Killing field $X$ and $\mathbb{H}^n$ is the totally geodesic surface orthogonal to $X,$ containing $o:=\gamma (0).$ Besides for $x \in \mathbb{H}^n,\; r(x)$ denotes the distance to $o,$ $ D_{\rho}(o)=\{ x \in \mathbb{H}^n\;|\;r(x)<\rho \} $   and $|X(x)|=\cosh (r(x))$ for any $x \in \H^n.$
	
	For instance we may consider the half-space model for $\mathbb{H}^{n+1},$ that is, $\mathbb{R}^{n+1}_{+}$ with the metric $ds^2 = \left(1/x_{n+1}^{2}\right)dx^2$, where $dx$ is the Euclidean metric. Then, if the geodesic $\gamma$ is the oriented $x_{n+1}$ axis $\gamma(t)=(0,e^t),$ then $$\varphi_t (x)=e^{t}x,\, \H^n=\{x_1^2+x_2^2+\dots +x_{n+1}^2=1,\;\;x_{n+1}>0\}\,\text{and } X(p)=p$$  for any $p\in \H^n.$ Furthermore the $X-$graphs are Euclidean radial graphs. We remark that we may assume that $X$ is this hyperbolic Killing field without loss of generality, since $\H^{n+1}$ is a homogeneous space. Nevertheless, we will take this model into account only for the figures.
	
	From Proposition 2.1 of \cite{DR}, a function $u:\Omega \to \R$ has CMC $H$ $X-$graph oriented with normal vector pointing in the direction of $X,$ if it satisfies $\mathcal{M}_H(u)=0$ for 
	\begin{equation}\label{operador}
	\mathcal{M}_H(u):=\dive\left( \frac{\cosh( r) \grad u}{\sqrt{1+\cosh^2(r)|\grad u|^2}}\right) +\frac{\langle\grad u,\sinh(r)\grad r\rangle}{\sqrt{1+\cosh^2(r)|\grad u|^2}}-nH,
	\end{equation}
 where $\dive$ and $\grad$ are the divergent and gradient respectively, in $\mathbb{H}^{n}.$

	\begin{proposition}\label{rad-sim}
		Given $\rho>0$ and $H\in(-1,1),$  there exists a function $ v_{\rho, H}:\mathbb{H}^n\setminus D_{\rho}(o)   \to \mathbb{R}$ such that $ v_{\rho,H}|_{\partial D_{\rho}}=0$ and the $ X-$graph of $ v_{\rho, H} $ has CMC $H.$ Besides, $$\displaystyle{\lim_{x\to \partial D_{\rho},\; x\notin D_{\rho}}|\grad v_{\rho,H}(x)|=+\infty,}$$
		and the $X-$graph of $v_{\rho, H}$ glued with the $X-$graph of  $-v_{\rho, H}$ is a rotational CMC $H$ surface presented in \cite{dCD}.
	\end{proposition}	
	\begin{proof}
	
	For $ u = f \circ r$ in $\mathbb{H}^n\setminus D_{\rho}(o)$, equation $\mathcal{M}_H(u)=0$ becomes an ODE for $f,$ which reads as	$$	g'(r)+g(r)((n-1)\coth(r)+\tanh(r))-nH=0\text{ in } (\rho, +\infty),$$
for $$g(r)=\frac{\cosh(r)f'(r)}{\sqrt{1+\cosh^2(r)f'^2(r)}} .$$
	This is the case since $\Delta r=(n-1) \coth (r) $ and $ |\grad r|=1.$ Besides, the gradient of $v$ being vertical at $\partial D_{\rho}$ means that $f'(\rho)=+\infty$ so that $g(\rho)=1.$
	The solution $g$ is given by $$g(r)=\frac{A(\rho,H)}{\sinh^{n-1}(r) \cosh ( r)}+H\tanh(r), \,r\in [\rho, \infty),$$
	where $$A(\rho,H)=\sinh^{n-1}(\rho) (\cosh (\rho) - H\sinh (\rho) ).$$

	Since the function $x \mapsto \frac{x}{\sqrt{1+x^2}},$ maps one-to-one $(0,+\infty)$ onto $(0,1)$ and has inverse $y \mapsto \frac{y}{\sqrt{1-y^2}},$ for $y\in (0,1),$ we may find the function $f$ by
	
	 \begin{equation}\label{rotsim}
		f(r)=\int_{\rho}^{r}\frac{g(t)}{\cosh(t)\sqrt{1-g(t)^{2}}}dt, \, r>\rho.
		\end{equation}
		
		So defining $ v_{\rho,H}(x)=f(r(x)),$ the result holds. One may prove that $0<g(r)<1$ and that the improper integral above is well defined.
	
	\end{proof}  
	
	Since the function $f'$ varies from $+\infty$ to zero and depends only on $r,$ the EDP for the exterior of any ball centered at $\gamma(0)$ is solvable for any norm of the gradient at the boundary.

	The next result is a consequence of the definition of $v_{\rho,H}.$
	
	\begin{lemma}\label{lem-grdtendezero}
		If $ 0\leq H \leq 1,$ then $ v_{\rho, H}:\H ^n\backslash D_\rho \to \R  $ is an increasing function of the distance to $\partial D_\rho$
   		$$\displaystyle \lim_{r(x) \to \infty}|\grad v_{\rho, H}(x)| =0 \text{ for } H \in [0,1); \text{ and }
\displaystyle \lim_{r(x) \to \infty}|\grad v_{\rho, 1}(x)| >0.$$
  
 \end{lemma}

Therefore the function $v_{\rho, 1}$ is unbounded. Nevertheless, we have

	\begin{lemma}\label{lem-radiais}
		Given $H$ in $[0,1),$ $v_{\rho, H}:\H ^n\backslash D_\rho \to \R $ 
		is bounded above by  
\begin{equation}\label{eq-BH}
		B(H)=\int_{0}^{+\infty}\frac{H+(1-H)e^{-2t}}{\cosh(t)\sqrt{1-(H+(1-H)e^{-2t})^2}}dt.
\end{equation}		
$B$ is an increasing function of $H$ defined in $[0,1),$ satisfying \begin{equation}\label{eq-c0}
B(0)=\frac{\Gamma(1/4)\Gamma(5/4)-\Gamma(3/4)^2}{\sqrt{2\pi}}
\end{equation}
and $\lim_{H\to 1^-} B(H)=+\infty.$
	\end{lemma}

	\begin{proof}
		Let us write $v_{\rho, H}(r(x))=f(r(x))$ for $$f(r)=\int_{0}^{r}\frac{x_{\rho, n}(t)}{\cosh(\rho+t)\sqrt{1-x_{\rho,n}(t)^2}}dt, \text{  where }$$

$$x_{\rho,n}(t)=\left(\frac{\sinh(\rho)}{\sinh(\rho+t)}\right)^{n-1} \frac{ (\cosh (\rho) - H\sinh (\rho))}{\cosh(\rho+t)}+H\tanh(\rho+t).$$

Since $\cosh$ and $a(x)=x/{\sqrt{1-x^2}}$ are increasing functions in $[0,\infty),$

$$f(r)\leq \int_{0}^{r}\frac{\overline{x}(t)}{\cosh(t)\sqrt{1-\overline{x}(t)^2}}dt,$$

for any upper bound $\overline{x}(t)\geq x_{\rho,n}(t).$

Since $\sinh(\rho)/\sinh(\rho+t)<1,$ it holds $x_{\rho,n}(t)\leq x_{\rho, 2}(t).$ Therefore an upper bound for $n=2$ is enough to prove the lemma.

	In this case, we omit the subscript 2 and $$x_{\rho}(t)=\frac{H\cosh(2\rho + 2t)+\sinh(2\rho)-H\cosh(2\rho)}{\sinh(2\rho + 2t)}. $$

Hence $\rho\mapsto x_{\rho}(t)$ is an increasing function of $\rho$ and $$ \displaystyle \lim_{\rho \to \infty} x_{\rho}(t)=H+(1-H)e^{-2t},$$ so that the bound $B(H)$ given in the statement is obtained.
		Since $B'(H)>0$ for $H\in [0,1),$ $B$ is an increasing function. Its value for $H=0$ was computed at www.wolframalpha.com.
	\end{proof}	
	
		The following result follows from Lemma 11 of \cite{DHL}.
	\begin{lemma}\label{lem-gradest}
		Let  $ \Omega $ be a $ C^{2,\alpha} $ bounded open subset of $\H^n$ and $ u \in C^{3}(\Omega)\cap C^1(\overline{\Omega}) $ a solution of $ \mathcal{M}_H(u)=0 $ in $ \Omega $. Assume that $u$ is bounded in $ \Omega $ and that $|\grad u|$ is bounded on $  \partial \Omega $. Then $|\grad u|$ is bounded in $ \Omega $ by a constant that depends only on $ \displaystyle\sup_{\Omega} |u| $
		and $ \displaystyle \sup_{\partial \Omega}|\grad u| .$
	\end{lemma}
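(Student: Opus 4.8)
The plan is to reformulate the desired bound geometrically and obtain it from the maximum principle applied on the graph itself; this is the content of Lemma 6 of \cite{DL}, which one may also invoke verbatim after checking that $\mathcal{M}_H(u)=0$ satisfies its structure conditions. Write $\Sigma$ for the $X$--Killing graph of $u,$ let $\nu$ be its unit normal chosen in the direction of $X,$ and set $W=\sqrt{1+\cosh^2(r)|\grad u|^2}.$ A direct computation of the normal to a Killing graph gives $\la\nu,X/|X|\ra=1/W,$ hence $\la\nu,X\ra=\cosh(r)/W.$ Since $\Omega$ is bounded we have $1\le\cosh r\le\cosh R$ with $R=\sup_{\Omega}r<\infty,$ so bounding $|\grad u|$ from above is equivalent to bounding $W$ from above, which is in turn equivalent to bounding the support function $f:=\la\nu,X\ra$ away from $0$ from below. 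It therefore suffices to produce a positive lower bound for $f$ depending only on $\sup_{\Omega}|u|$ and $\sup_{\partial\Omega}|\grad u|.$

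The crucial structural fact is that $f=\la\nu,X\ra$ is a Jacobi field on $\Sigma.$ Indeed, $X$ generates a one--parameter group of isometries $\varphi_t$ of $\HHH$ that preserve constant mean curvature, so linearizing the mean curvature operator along the variation $X$ yields
\[
\Delta_{\Sigma}f+\bigl(|A|^2+\overline{\mathrm{Ric}}(\nu,\nu)\bigr)f=0,
\]
and in $\HHH$ one has $\overline{\mathrm{Ric}}(\nu,\nu)=-2.$ Because $|A|^2\ge 2H^2,$ the zeroth order coefficient satisfies $|A|^2-2\ge 2(H^2-1),$ which is bounded below but has no definite sign; thus $f$ need not be superharmonic and the minimum principle cannot be applied to $f$ directly. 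I would absorb this indefinite term with a weight built from the (bounded) height. Letting $\tau$ denote the Killing--time function on $\Sigma,$ determined by $\tau(\varphi_{u(x)}(x))=u(x)$ so that $\|\tau\|_{\infty}=\sup_{\Omega}|u|,$ I consider
\[
\psi:=f\,e^{\beta\tau},\qquad \beta>0,
\]
and rewrite $\Delta_{\Sigma}\psi$ using the Jacobi equation; after transferring the first order term onto $\psi$ this gives
\[
\Delta_{\Sigma}\psi-2\beta\la\grad_{\Sigma}\tau,\grad_{\Sigma}\psi\ra=\bigl[-(|A|^2-2)-\beta^2|\grad_{\Sigma}\tau|^2+\beta\,\Delta_{\Sigma}\tau\bigr]\psi.
\]
The goal is to choose $\beta=\beta(H,R)$ so that the bracketed coefficient is nonpositive, making $\psi$ a supersolution of a homogeneous second order elliptic operator on $\Sigma.$

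This last step is the main obstacle: it requires the explicit expressions for $\grad_{\Sigma}\tau$ and $\Delta_{\Sigma}\tau,$ which --- because $|X|=\cosh r$ is not constant --- carry additional terms in $\grad r$ and $\sinh r$ absent in the product model $\H^2\times\R,$ and one must check that these, together with $-(|A|^2-2)\le 2(1-H^2),$ are dominated by the negative contribution $-\beta^2|\grad_{\Sigma}\tau|^2$ once $\beta$ is large. Granting this, $\psi$ is a supersolution, so by the minimum principle $\psi$ attains its minimum on $\partial\Sigma.$ On $\partial\Sigma$ the gradient is controlled by hypothesis, whence $W$ and $f=\cosh(r)/W$ are bounded there in terms of $\sup_{\partial\Omega}|\grad u|,$ while $e^{\beta\tau}$ is bounded in terms of $\sup_{\Omega}|u|;$ this yields a positive lower bound for $\psi$ on $\partial\Sigma,$ hence on all of $\Sigma,$ and therefore a positive lower bound for $f$ depending only on $\sup_{\Omega}|u|$ and $\sup_{\partial\Omega}|\grad u|.$ Unwinding the equivalences of the first paragraph produces the asserted bound on $|\grad u|,$ which is exactly the estimate packaged in Lemma 6 of \cite{DL}.
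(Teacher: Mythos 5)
The paper offers no argument of its own for this lemma: it is stated as an immediate consequence of Lemma 6 of \cite{DL}, so your fallback of invoking that lemma verbatim is precisely the authors' proof, and on that score the proposal is fine. The self-contained argument you sketch, however, has a genuine gap at exactly the step you flag and then pass over with ``granting this'': the claim that the bracketed coefficient $-(|A|^2-2)-\beta^2|\grad_\Sigma\tau|^2+\beta\,\Delta_\Sigma\tau$ can be made nonpositive on all of $\Sigma$ ``once $\beta$ is large'' is false as stated. Since the level sets of $\tau$ are orthogonal to $X$ and $d\tau(X)=1$, the ambient gradient of $\tau$ is $X/|X|^2$, so $\grad_\Sigma\tau$ is its tangential part and $|\grad_\Sigma\tau|^2=\bigl(|X|^2-f^2\bigr)/|X|^4=(1-1/W^2)/\cosh^2 r$. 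This vanishes exactly at interior critical points of $u$ (where $\nu$ is parallel to $X$, $W=1$), i.e.\ precisely where your helping term $-\beta^2|\grad_\Sigma\tau|^2$ is needed least cheaply. At such a point the bracket reduces to $(2-|A|^2)+\beta\,\Delta_\Sigma\tau$, where $2-|A|^2$ can be as large as $2(1-H^2)>0$ and $\Delta_\Sigma\tau$ has no sign; enlarging $\beta$ can then make the coefficient more positive, not less. Hence no constant choice $\beta=\beta(H,R)$ makes $\psi=f e^{\beta\tau}$ a global supersolution, and the minimum-principle step collapses.

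The standard repair --- and morally what is inside the proof in \cite{DL} --- is to run the maximum principle only where the gradient is already large: on $\Sigma\cap\{W\geq 2\}$ one has $|\grad_\Sigma\tau|^2\geq \tfrac{3}{4}\cosh^{-2}R$ with $R=\sup_\Omega r$, and since $\Omega$ and $u$ are bounded the graph lies in a fixed compact set, so $|\Delta_\Sigma\tau|$ is bounded there (this is also where the $\sinh r$ and $\grad r$ terms you deferred must actually be carried out). Then a large $\beta$ depending on $H$, $R$, $\sup_\Omega|u|$ and these ambient bounds does make the coefficient nonpositive on $\{W\geq2\}$, and the minimum of $\psi$ over that region is attained either over $\partial\Omega$, where it is controlled by $\sup_{\partial\Omega}|\grad u|$, or on the level set $\{W=2\}$, where $f=\cosh(r)/W\geq 1/2$; unwinding then gives the bound. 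Note that this (like any correct version of the estimate) produces a constant that also depends on the domain through $\cosh R$, not only on $\sup_\Omega|u|$ and $\sup_{\partial\Omega}|\grad u|$ as the lemma's wording suggests --- which is consistent with how the paper actually applies it, with $C=C(h(m),s,\Omega_m)$. As submitted, though, your direct argument is incomplete at its decisive step, and the step fails as formulated.
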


	\section{Main results}\label{sec-main}

	Our existence result for the minimal hypersurfaces is inspired on its analogous version in $M^2\times \R$ proved in \cite{ESR}.

	\begin{proof}[Proof of Theorem \ref{teo-H=0}]
		
		We start with the case that $\Omega$ is a $C^\infty$ domain. 
		Let $D_0\subset \H^n$ be the smallest ball centered at $o$ with $\H^n\backslash\Omega\subset \overline{D_0}.$ Let $R_1>R_0$ be such that $$|\grad v_{R_0}(x)|_{\partial D_{R}}<s/2 \text{ for all } R\geq R_1,$$ which exists since $$\displaystyle \lim_{r(x)\to \infty}|\grad v_{R_0}(x)|=0$$ %(see Lemma \ref{lem-grdtendezero}). 
  We define the sequence $R_m=R_1+m,\, m\ge 2$ and set $\Omega_m=D_{R_m} \cap \Omega.$
  ~\\
		
		{\bf Claim:} For each $m\in \N ,$ there is a function $u_m\in C^\infty(\overline{\Omega_m}),$ such that $$\mathcal{M}_0(u)=0, \,  u_m=0 \text{ on }\partial\Omega \text{ and }\displaystyle{\sup_{\partial\Omega} |\grad u_m|= s.}$$
		
		Observe that $\Om$ is a connected domain and its boundary is $\partial \Omega\cup \partial D_{R_m}.$ Set $\Gamma_m=\partial D_{R_m}$ and define
 
  \begin{equation*}%\label{conj_Tm}
		T_m=\left\{t\ge 0\,\big{|}\, \exists\, u_t\in C^\infty (\overline{\Om}) \text{ solution of }PD_m(t)\right\},
		\end{equation*}
where
\begin{equation}\tag{PD$_m(t)$}\label{PDm}
\left\{\begin{array}{ll}
 \mathcal{M}_0(u)=0 &\text{ in } \Omega_m,\\
  u=0 &\text{ on }{\partial\Omega},\\ 
  u =t &\text{ on }{\Gamma_m},\\
 |\grad u_t|\leq s &\text{ on }{\partial\Omega.}
\end{array}\right.
\end{equation}

		We will prove that $h(m)=\sup T_m$ is well defined and belongs to $T_m.$ Then we demonstrate that $u_m,$ the function associated to $h(m),$ satisfies the claim.

		Since $0 \in T_m,$ $T_m$ is not empty. We prove in the sequel that  $\sup T_m \leq B(0),$ for $B(0)$ given by \eqref{eq-c0}. Assume for contradiction that there is $T\in T_m,$ $T>B(0).$ From Lemma \ref{lem-radiais}, there is a function $v_R: \H^n\backslash D_R \to \R$ for each $R\leq R_m,$ that vanishes on $\partial D_R,$ has a minimal $X-$graph which is tangent to the $X-$Killing cylinder that passes through $\partial D_R$. Besides, all these functions are bounded by $B(0).$
		
		\begin{figure}[h!]
			\centering
			\includegraphics[width=0.7\linewidth]{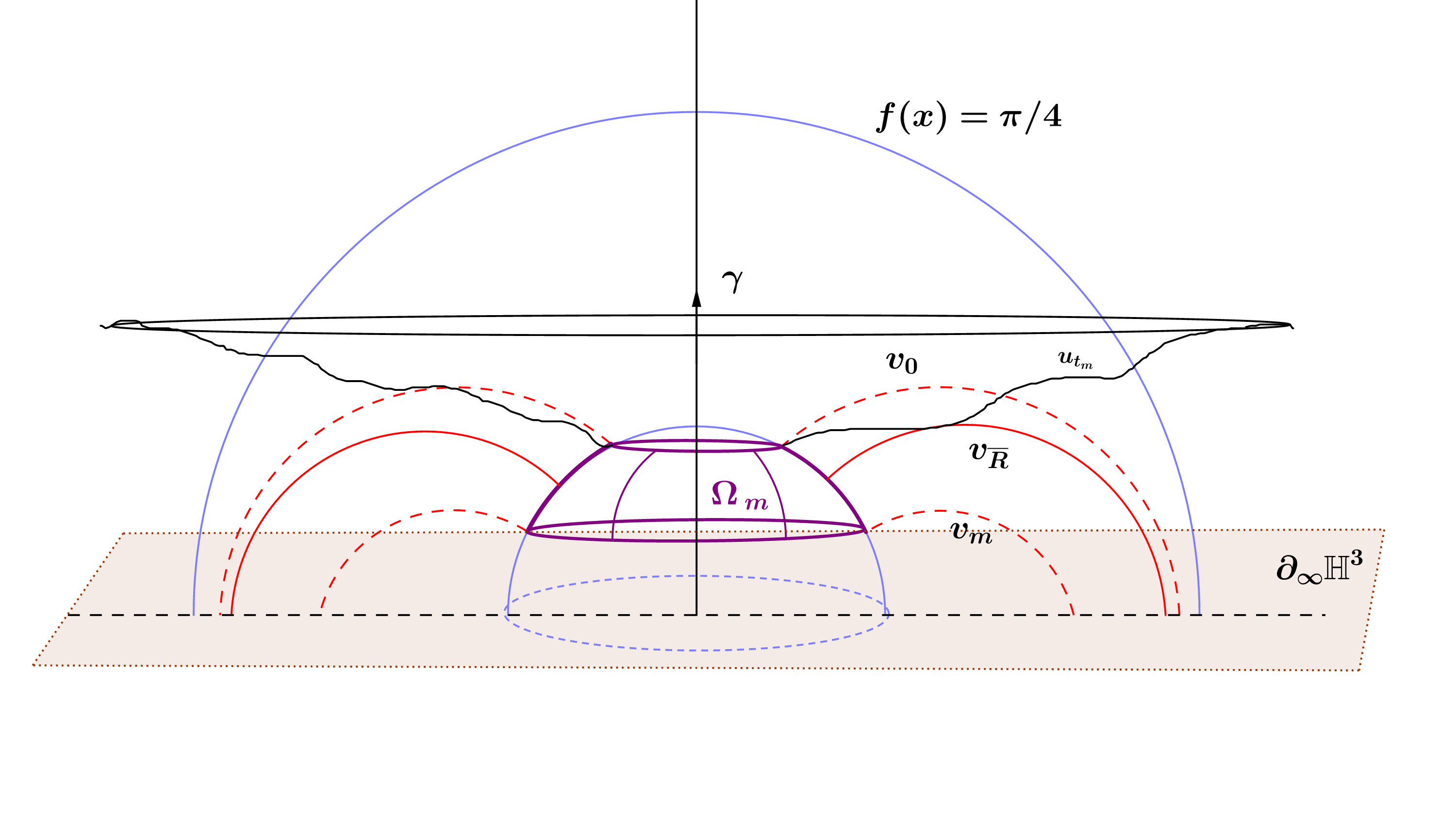}
			\caption{ Maximum principle}
			\label{fig:limitacaoboa}
		\end{figure}

		Notice that for $R=R_m$ the $X-$graphs of $u_T$ and $v_{R_m}$ are disjoint since the intersection of the domains of these functions is $\Gamma_m$ and on $\Gamma_m,$ $$u_T=T>B(0)>0=v_{R_m}.$$ Now consider the family $\{v_R\}_{R\in [R_0,R_m]}$ and define $\bar{R}$ the minimum value of $$\{R\in [R_0,R_m]\,\big{|}\,  {\rm Gr}(v_r)\cap {\rm Gr} (u_T)=\emptyset\, \forall\, r\geq R\},$$ which corresponds to the first contact point of the graph of $u_T$ with a graph of $v_R$ in this family such that the graph of $v_R$ is disjoint from the graph of $u.$ The graph of $v_{\bar{R}}$ touches the graph of $u_T$ at an interior point, leading us to a contradiction with the maximum principle (see Figure \ref{fig:limitacaoboa}). More precisely, the inequality $T>B(0)$ implies that the touching point cannot be in the cylinder over $\Gamma_m,$ and it also cannot be over $\partial\Omega$ because it would contradict $\sup_{\partial \Omega} |\grad u_T|\leq s.$ 
~\\
  
	In order to find the function $u_m=u_{h(m)}$, we prove next that $h(m)$ belongs to $T_m.$ First, we observe that our choice of $R_1$ guarantees that, for all $m,$
		if $t\in T_m,$ $$|\grad u_t|\leq s/2 \text{ on } \Gamma_m.$$ This is again a consequence of the maximum principle, which implies that $$v_{R_0}-(v_{R_0}(R_m)+t)\le u_t\le t \text{ in }\Omega_m.$$ Hence, we have upper (constant $t$) and lower $$v_{R_0}-(v_{R_0}(R_m)+t)$$ barriers to estimate the gradient of $u_t$ on $\Gamma_m$ and therefore for $t\in T_m,$ $$\sup_{\partial\Omega_m} |\grad u_{t}|\leq s.$$
		
		To see that $h(m)\in T_m,$ let $(x_n)$ be a sequence in $T_m$ with $x_n\to h(m).$ Let $u_n=u_{x_n}$ and observe that from the maximum principle  $0\leq u_n \leq h(m)$ in $\Omega_m.$ Besides, $|\grad u_n|$ is bounded by $s$ on $\partial \Omega_m.$ Hence, from Lemma $\ref{lem-gradest},$ there is a constant $C=C(h(m), s, \Omega_m),$ such that $|u_n|_{1,\Omega_m}\leq C.$ Now standard PDE theory implies that the sequence $(u_n)$ has a convergent subsequence in the $C^2$ norm on $\overline{\Omega}$ and, for $w$ being the limit of the sequence, it holds that $w=u_{h(m)}$ so that $h(m)\in T_m.$
		
\		
		
		To prove that $$\sup_{\partial\Omega} |\grad u_{h(m)}|= s,$$ we follow the same idea of \cite{ESR}, which consists in showing that 
		$$\sup_{\partial\Omega} |\grad u_{h(m)}|< s$$ would contradict the fact that $h(m)$ is the supremum of $T_m$. Intuitively, if the strict inequality held, we could raise a little more the boundary data $h(m)$ and still have the gradient bounded by $s.$ 
  
  To be precise, fix any function $\varphi \in C^\infty(\overline{\Omega_m})$ such that $\varphi$ vanishes on $\partial \Omega_m$ and $\varphi\equiv h(m)$ on $\Gamma_m$ and then define an operator $$T:[0,2]\times C^\infty(\overline{\Omega_m}) \to C^\infty(\overline{\Omega_m})\text{ by }$$ $$T(t, w)=\mathcal{M}_0(w+t\varphi).$$
		Notice that $T$ is a $C^1$ operator, $T(1,u_{h(m)}-\varphi)=0$ and $\partial_2T(t,u_{h(m)}-\varphi)$ is an isomorphism. So, the Implicit Function Theorem in Banach spaces states that there is $\delta>0$ and a continuous function $$i:(1-\delta,1+\delta) \to C^\infty(\overline{\Omega_m}),$$ with $i(1)=u_{h(m)}-\varphi$ and such that $T(t, i(t))=0.$ So, there must be a $t_0>1,$ with $|\grad i(t_0)|<s$ on $\partial\Omega.$ This contradicts the definition of $h(m)$ since $t_0h(m)$ would belong to $T_m.$ By taking $u_m=u_{h(m)},$ the claim is demonstrated. 
		
		\
		
		Now consider the sequence of functions $u_m\in C^\infty(\overline{\Omega_m}).$ For any $\Omega_k\subset \Omega$ compact, there is a constant $c(k),$ such that $$|u_m|_{1,\overline{\Omega_k}}\leq c(k) \text{ for all }m> k.$$ Once again, standard PDE theory implies the existence of a  subsequence of $ (u_m)$ converging, in the $ C^2 $ norm on $ \overline{\Omega_k}, $ to a solution $ v_k \in C^{\infty}(\overline{\Omega_k}) $ of $\mathcal{M}_0=0$ in $\Omega_k$ satisfying $v_k=0$ on $\partial \Omega$ and $\sup_{\partial\Omega} |\grad v_k|= s.$ To obtain a solution in $\Omega,$ we iterate this process of taking subsequences: The first subsequence is taken to converge in $\Omega_1,$ then, for $\Omega_2,$ we take a subsequence from the one we already know that converges in $\Omega_1$ and so on. By the diagonal method, we obtain  %consisting in taking %the first function of the subsequence from $\Omega_1,$ the second of the subsequence from $\Omega_2$ and so on 
		 the existence of a solution $u \geq 0$ of $\mathcal{M}_0=0$ in $\Omega$ satisfying $u=0$ on the boundary and $\sup_{\partial\Omega} |\grad u|= s.$ Since each element of the sequence is bounded by $B(0),$ so is the limit.

		\
		
		For the general case of $\Omega$ being a $C^0$ domain we approximate it by $C^\infty$ domains $(U_j)$ such that $\Omega\subset U_{j+1}\subset U_j$ and $\Omega=\displaystyle{\cap_j}U_j.$ For each $j\in\mathbb{N},$ let $u_j \in C^{\infty}(\overline{U_j})$ be the solution that exists by the case proved above, that is $\mathcal{M}_0(u_j)=0,\;u_j|_{\partial U_j}=0,\; \sup_{\partial \Omega }|\grad u_j|=s\leq |u_j|_1 \leq C(s)$.
		
		From the Arzela-Ascoli Theorem we obtain that $(u_j)$ converges uniformly, on compact subsets of $\overline{\Omega},$ to a function $u_s \in C^{0}(\overline{\Omega})$ such that $u_{s}|_{\partial \overline{\Omega}}=0.$ Besides, regularity theory implies $u_s \in C^{\infty}(\Omega).$ By the diagonal method, we obtain the existence of a subsequence $(u_j)$ converging to a solution $u_s \in C^{\infty}(\Omega )\cap C^{0}(\overline{\Omega})$ of the minimal surface equation.
		
		\end{proof}

	Although this result was inspired in its version for $\mathbb{H}^2\times \R,$ we were not able to construct barriers depending on the distance to the boundary, so we changed the definition of the exhaustion sets $\Omega_m.$ For that, we had to present the rotational barriers from Section \ref{sec-rot} and adapt some steps in the proof. Therefore we were able to avoid the requirement on the existence of an exterior sphere condition Theorem 1 of \cite{ESR}. We remark that the proof of Theorem \ref{teo-H=0} can be applied in any situation where rotational surfaces are known and therefore one can also obtain an improvement of Theorem 1 of \cite{ESR}.

	\begin{theorem}\label{teo-H=0emH2R}
		Let $\Omega \subset \mathbb{H}^2$ be a $C^0$ exterior domain. Then, given $s\ge 0,$ there is $u\in C^\infty(\Omega)\cap C^0(\overline{\Omega}),$ such that:\begin{enumerate}[label=(\roman*)]
			\item the vertical graph of $u$ is a minimal surface in $\mathbb{H}^2\times\R,$
			\item $u\ge 0$ in $\Omega$ and $u=0$ on $\partial\Omega,$
			\item $\displaystyle \limsup_{x\to \partial\Omega} |\grad u|=s,$
			\item $\displaystyle \sup_{\Omega} |u|<\frac{\pi}{2}.$
		\end{enumerate}
	\end{theorem}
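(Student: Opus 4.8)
The plan is to repeat the proof of Theorem~\ref{teo-H=0} essentially verbatim, replacing the hyperbolic Killing graph operator $\mathcal{M}_0$ by the minimal surface operator for vertical graphs in $\H^2\times\R$,
$$\mathcal{M}(u)=\dive\left(\frac{\grad u}{\sqrt{1+|\grad u|^2}}\right),$$
with $\dive$ and $\grad$ taken in $\H^2$, and replacing the rotational barriers of Section~\ref{sec-rot} by the rotational minimal surfaces of $\H^2\times\R$. Every part of the earlier proof that is not specific to $\HHH$ transfers unchanged: the continuity method built around the sets $T_m$, the first-contact maximum principle comparison of Figure~\ref{fig:limitacaoboa}, the Implicit Function Theorem step that upgrades the gradient inequality to the equality in iii), and the diagonal/Arzel\`a--Ascoli passage to the limit together with the $C^0$ reduction. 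Only two ingredients must be reestablished in the new ambient space: the rotational barriers with their height bound, and the interior gradient estimate.

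First I would construct the analogue of $v_\rho$. Writing $u=f\circ r$ and using $\Delta r=\coth r$ and $|\grad r|=1$ in $\H^2$, the equation $\mathcal{M}(u)=0$ reduces, after setting $g(r)=f'/\sqrt{1+f'^2}$, to $(\sinh r\,g)'=0$, hence $\sinh r\,f'/\sqrt{1+f'^2}=C$. Choosing the integration constant $C=\sinh\rho$, so that the graph has a vertical tangent along $\partial D_\rho$, produces
$$v_\rho(x)=\int_\rho^{r(x)}\frac{\sinh\rho}{\sqrt{\sinh^2 t-\sinh^2\rho}}\,dt,$$
a rotational minimal graph over $\H^2\setminus D_\rho$ vanishing on $\partial D_\rho$, with $|\grad v_\rho|=\sinh\rho/\sqrt{\sinh^2 r-\sinh^2\rho}\to 0$ as $r\to\infty$; this is the analogue of Lemma~\ref{lem-grdtendezero}. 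For the analogue of Lemma~\ref{lem-radiais}, the substitution $\sinh t=\sinh\rho\,\sec\theta$ turns the height into
$$\sup v_\rho=\int_0^{\pi/2}\tanh\bigl(t(\theta)\bigr)\,d\theta<\frac{\pi}{2},$$
the strict inequality coming from $\tanh<1$; since $\tanh(t(\theta))\uparrow 1$ as $\rho\to\infty$, the supremum of the heights over all $\rho$ equals $\pi/2$. This furnishes the strict bound in iv).

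With these barriers in hand the argument runs as before: on the exhaustion $\Om=D_{R_m}\cap\Omega$ I would define $T_m$ exactly as in \eqref{eq-defTm}, use the family $\{v_R\}_{R\in[R_0,R_m]}$ to show $\sup T_m\le \pi/2$ via the first-contact argument of Figure~\ref{fig:limitacaoboa}, and use the two-sided barrier $v_{R_0}-(v_{R_0}(R_m)+t)\le u_t\le t$ to control $|\grad u_t|$ on $\Gm$. The interior gradient estimate of Lemma~\ref{lem-gradest}, which for vertical minimal graphs in $\H^2\times\R$ is equally classical, supplies the compactness needed to prove $h(m)=\sup T_m\in T_m$ and to pass to the limit, while the Implicit Function Theorem step gives $\sup_{\partial\Omega}|\grad u_{h(m)}|=s$. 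The diagonal method then yields a solution on $\Omega$ bounded by $\pi/2$, and the $C^0$ domain case follows by approximating $\Omega$ from outside by $C^\infty$ domains as in the proof of Theorem~\ref{teo-H=0}.

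The only genuinely new point, and the one requiring care, is the height bound $\pi/2$: one must verify that the improper integral defining $\sup v_\rho$ converges for each finite $\rho$, that each $v_\rho$ is strictly below $\pi/2$, and that the supremum over $\rho$ of these heights is exactly $\pi/2$. It is this bound, in place of the $\pi/4$ coming from Lemma~\ref{lem-radiais}, that simultaneously caps $\sup T_m$ in the maximum principle step and produces conclusion iv). Everything else is a transcription of the earlier proof, the maximum principle and the interior gradient estimate being available for the minimal operator in $\H^2\times\R$ exactly as they are for $\mathcal{M}_0$ in $\HHH$.
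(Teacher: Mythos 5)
Your proposal follows exactly the paper's route: the paper proves Theorem~\ref{teo-H=0emH2R} by repeating the proof of Theorem~\ref{teo-H=0} with the rotational catenoids \eqref{eq-def-cat} of $\mathbb{H}^2\times\R$ (taken from \cite{KST}) in place of the functions $v_{\rho,\infty,0}$, and your barrier $v_\rho(x)=\int_\rho^{r(x)}\frac{\sinh\rho}{\sqrt{\sinh^2 t-\sinh^2\rho}}\,dt$ is precisely $\tilde{v}_{\rho,\infty,0}$ after the change of variable $t\mapsto t+\rho$. Your height computation $\int_0^{\pi/2}\tanh\bigl(t(\theta)\bigr)\,d\theta<\frac{\pi}{2}$ and the gradient decay correctly supply the analogues of Lemmas~\ref{lem-radiais} and~\ref{lem-grdtendezero}, which is all the paper's one-line proof implicitly requires, so the remaining transcription of the continuity-method, first-contact, Implicit Function Theorem and diagonal arguments matches the paper's intent.
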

	
This result follows the same steps of $\mathbb{H}^3$ if one uses the rotational catenoids 
\begin{equation}\label{eq-def-cat}
\displaystyle\tilde{v}_{\rho, 0}(r) =\int_0^r \frac{\sinh({\rho})}{\sqrt{\sinh^2(t+{\rho})-\sinh^2({\rho})}}dt, \, r>0
\end{equation}	
instead of the functions $v_{\rho, 0}.$ The expression (\ref{eq-def-cat}) can be found in \cite{KST}.

\begin{remark}\label{rem-naoexist} The EDP for $\Omega=\H^n\backslash B_1(0)$ 
$$\left\{\begin{array}{ll}
\mathcal{M}(u)=0 &\text{in }\Omega\\
u =0 &\text{in }\partial\Omega,\\
u =C &\text{in }\partial_\infty\Omega,
\end{array}\right.$$
has no solution for $C\geq B(0)$ for $B(0)$ given by \eqref{eq-c0}.

\end{remark}

\begin{proof}
It is important to observe the the flux $\{\varphi_t\}_{t\in\rr}$ of the hyperbolic Killing field $X$ extends naturally to the asymptotic boundary of $\H^{n+1}.$ Let $$U=\{\varphi_t(x)\,|\, t\in \rr \text{ and } x\in \Omega\}$$ be the Killing cyllinder over $\Omega.$

Assume by contradiction the existence of a complete minimal hypersurface $M$ immersed in $U$ with boundary $\partial M=\partial\Omega$ and $\partial_{\infty}M= \Gamma$ for $$\Gamma=\{\varphi_C(\partial_\infty\H^n)\} \subset\partial_{\infty}\H^{n+1}.$$

Let $S$ be the $X-$graph of $v_{1,0}$ from Proposition \ref{rad-sim}, which is a minimal surface with boundary $\partial\Omega$ and asymptotic boundary $\{\varphi_h(\partial_\infty\H^n)\}$ for some $h<B(0)\leq C.$ Besides it is tangent to $\partial U$ along $\partial\Omega.$

If $M \cap S = \partial\Omega,$ then $S\backslash \partial S$ is contained at the
connected component of $U \backslash M$ which contains $\partial_\infty S.$ In particular, $S$ is at one side of M around $\partial\Omega.$ This contradicts Hopf boundary lemma since $M$ and $S$ have, up to sign, the same unit normal vector along $\partial\Omega.$

If $M \cap S \backslash \partial B$  is nonempty, then $M$ and $S$ have common interior points. Since $M \cap S$ is compact because $h<C,$ we may displace $S$ along the flow
$\{\varphi_t\}_{t<0}$ to get a tangency point between $M$ and $\varphi_{t_0}(S)$ at an interior point, for some
$t_0 < 0$ with $\varphi_{t_0}(S)$ contained in one side of $M,$ a contradiction. 
\end{proof}

\subsection{The case $H\in (0,1)$}	
	
We conclude this manuscript by considering graphs over $E_H,$ which are umbilical hypersurfaces of CMC $H.$	Since for $H \in (0,1),$ $E_H$ is an entire $X-$graph over $\H^n,$ graphs over $E_H$ are also graphs over $\H^n$ and vice versa. Nevertheless, it is natural to assume that our exterior domains are subsets of $E_H.$ This is the case, since the geometric idea of our proof is to consider the exterior domain $\Omega$ as the graph (of the zero function) and lift the boundary data in order to find non trivial solutions.

It is also important to mention that the EDP for $H\neq 0 $ has already been considered in the ambient space $\H^2\times \rr$ in \cite{CS}. An existence result for the EDP in $\Omega$ with vanishing boundary data at $\partial \Omega$ and requiring that the solution diverges to infinity at any point in $\partial_\infty\H^2$ is presented there.

This section is organized as follows: We present some facts about $X-$graphs over $E_H$ in Proposition \ref{graphundergraph} and a way to take the rotational surfaces from Section \ref{sec-rot} to have boundary on $E_H$ in Lemma \ref{lem-radiaisHneq0}. With these tools, Theorem \ref{teoHneq0} can be proved following the same steps as Theorem \ref{teo-H=0}.

	\begin{proposition}\label{graphundergraph}
		Let $ E_H \subset \mathbb{H}^{n+1} $ be an umbilical hypersurface with CMC $ H  $ that is equidistant to  $ \mathbb{H}^n $ and $ X $ a 	hyperbolic Killing field that is orthogonal to $ \mathbb{H}^n.$ Consider  $ w: \mathbb{H}^n \to \mathbb{R} $ the function such that $ {\rm Gr}(w)= E_H.$ Then, given a function $ u:E_H \to \mathbb{R}, $ we have that $ \tilde{u}: \mathbb{H}^n \to \mathbb{R} $ defined by $$ \tilde{u}(\tilde{x})=u(\varphi_{w(\tilde{x})}\tilde{x})+ w(\tilde{x}) $$ is such that $ {\rm Gr}(\tilde{u})={\rm Gr}(u) $ and $$ \grad \tilde{u}(\tilde{x})= \langle \grad u(x), X(x)\rangle_x \grad w(\tilde{x})+\left[D_{2}\varphi_{w(\tilde{x})}(\tilde{x})\right]^{-1}\pi (\grad u(x)),$$ where $\pi :T_xE_H \to T_{x}\left(\varphi_{w(\tilde{x})}\mathbb{H}^n\right)$ is the natural projection and $ x=\varphi_{w(\tilde{x})}(\tilde{x}).$
		
	\end{proposition}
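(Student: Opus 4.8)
The plan is to prove the two assertions in turn, using only the group property of the Killing flow for the first and a chain-rule computation for the second. For the set equality $Gr_X(\tilde u)=Gr_X(u)$, write $F:\H^2\to E_H$, $F(\tilde x)=\varphi_{w(\tilde x)}(\tilde x)$, for the parametrization of $E_H=Gr_X(w)$ over $\H^2$; since $E_H$ is a Killing graph over $\H^2$, $F$ is a diffeomorphism. For $\tilde x\in\H^2$ and $x=F(\tilde x)\in E_H$, the one-parameter group law $\varphi_{a+b}=\varphi_a\circ\varphi_b$ gives
$$\varphi_{\tilde u(\tilde x)}(\tilde x)=\varphi_{u(x)+w(\tilde x)}(\tilde x)=\varphi_{u(x)}\bigl(\varphi_{w(\tilde x)}(\tilde x)\bigr)=\varphi_{u(x)}(x).$$
As $\tilde x$ ranges over $\H^2$, the point $x=F(\tilde x)$ ranges over all of $E_H$, so the two point sets coincide.

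For the gradient I would differentiate $\tilde u=u\circ F+w$. The core step is the differential of $F$: applying the chain rule to $\tilde x\mapsto\varphi_{w(\tilde x)}(\tilde x)$, differentiation in the time slot yields the flow velocity $X(x)$ and in the space slot the spatial differential, so that for $\tilde v\in T_{\tilde x}\H^2$
$$dF_{\tilde x}(\tilde v)=dw_{\tilde x}(\tilde v)\,X(x)+D_2\varphi_{w(\tilde x)}(\tilde x)[\tilde v].$$
The first summand points along $X(x)$ and the second lies in $T_x(\varphi_{w(\tilde x)}\H^2)$; their sum is tangent to $E_H$ because $F$ maps into $E_H$, even though neither summand is tangent to $E_H$ individually. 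Hence $d(u\circ F)_{\tilde x}(\tilde v)=\la\grad u(x),dF_{\tilde x}(\tilde v)\ra_x$, the pairing taken with the ambient $\HHH$ metric restricted along $E_H$, splits into the two corresponding pieces.

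To convert this into a gradient I would use two facts. Since $X$ is Killing and generates $\varphi_t$, the flow preserves $X$, so $X\perp\H^2$ propagates to $X(x)\perp T_x(\varphi_{w(\tilde x)}\H^2)$; this identifies the natural projection $\pi$ onto $T_x(\varphi_{w(\tilde x)}\H^2)$ with orthogonal projection and lets me replace $\grad u(x)$ by $\pi(\grad u(x))$ in the pairing with the term $D_2\varphi_{w(\tilde x)}(\tilde x)[\tilde v]$ without changing its value. Since $\varphi_{w(\tilde x)}$ is an isometry, $D_2\varphi_{w(\tilde x)}(\tilde x)$ is a linear isometry of the two tangent planes, so it moves to the other factor as $[D_2\varphi_{w(\tilde x)}(\tilde x)]^{-1}$. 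Writing $dw_{\tilde x}(\tilde v)=\la\grad w(\tilde x),\tilde v\ra$ and collecting all terms into the form $\la\,\cdot\,,\tilde v\ra$ valid for every $\tilde v$ then produces the gradient formula.

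The main care points, rather than any single hard step, are the bookkeeping of tangent planes and metrics: $\grad u(x)\in T_xE_H$ is paired once with $X(x)$ in the full space $\HHH$ and once, after projection, with vectors of $T_x(\varphi_{w(\tilde x)}\H^2)$, and one must verify that the orthogonal complement of $T_x(\varphi_{w(\tilde x)}\H^2)$ is exactly the line $\R\,X(x)$, which is what makes $\pi$ the natural projection. I would also keep track of the contribution of the explicit $+w$ in the definition of $\tilde u$, which adds a $\grad w(\tilde x)$ arising from $\grad w$ together with the $X(x)$-pairing term; the special case $u\equiv 0$, where $\tilde u=w$ forces $\grad\tilde u=\grad w$, is a convenient sanity check on the coefficient of $\grad w(\tilde x)$ in the final expression.
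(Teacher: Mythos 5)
Your proposal is correct and follows essentially the same route as the paper: differentiate the graph parametrization $\tilde{x}\mapsto\varphi_{w(\tilde{x})}(\tilde{x})$ by the chain rule (the paper does this along a curve $\tilde\alpha$) to get $dF_{\tilde{x}}(\tilde{v})=\langle \grad w(\tilde{x}),\tilde{v}\rangle_{\tilde{x}}\, X(x)+D_2\varphi_{w(\tilde{x})}(\tilde{x})\cdot\tilde{v}$, use that $X(x)\perp T_x\bigl(\varphi_{w(\tilde{x})}\mathbb{H}^2\bigr)$ to replace $\grad u(x)$ by $\pi(\grad u(x))$ in that pairing, and transfer the linear isometry $D_2\varphi_{w(\tilde{x})}(\tilde{x})$ across the inner product as its inverse. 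One remark: your $u\equiv 0$ sanity check is well taken---your derivation, exactly like the paper's own final display in its proof, produces an additional summand $+\,\grad w(\tilde{x})$ coming from the explicit $+w$ in the definition of $\tilde u$, and this term is missing from the formula as printed in the proposition statement, which appears to be a typo there rather than a flaw in your argument.
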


\begin{figure}[h!]
	\centering
	\includegraphics[width=0.7\linewidth]{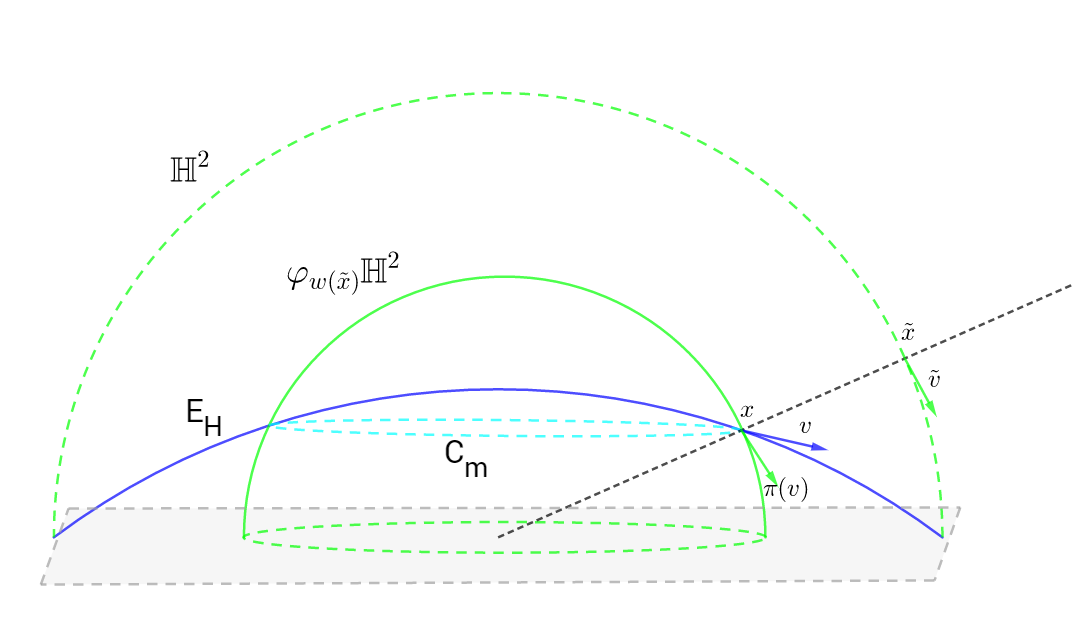}
	\caption{Relating $\grad u$ and $\grad \tilde{u}$}
	\label{fig:proposicaogradiente}
\end{figure}
	
	\begin{proof}
		
Observe that $X-$graphs with domain $\Omega \subset E_H$ are well defined by
$${\rm Gr}(u)=\{\varphi_{u(x)} x\;|\;x \in \Omega \}.$$	
		
		First, we show that $ {\rm Gr}(\tilde{u})={\rm Gr}(u). $ Denoting by $ x=\varphi_{w(\tilde{x})}(\tilde{x}), $ 
$$\begin{array}{ll}
{\rm Gr}\left(\tilde{u}\right)& = \{\varphi_{\tilde{u}(\tilde{x})} \tilde{x}\;|\;\tilde{x}\in \mathbb{H}^n\} \\ \vspace{.1cm}
&= \left\{\varphi_{u(x)+w(\tilde{x})} \tilde{x}\;|\;\tilde{x}\in \mathbb{H}^n\right\}\\ \vspace{.1cm} 
& =\{ \varphi_{u(x)}\circ \varphi_{w(\tilde{x})}\tilde{x}\;|\;\tilde{x}\in \mathbb{H}^n\}\\ \vspace{.1cm} & =\{\varphi_{u(x)} x\;|\;x \in E_H\}=  {\rm Gr}\left(u\right).
\end{array}$$

		For the computation of $ \grad \tilde{u}(\tilde{x}),$ we consider a curve $ \tilde{\alpha}: (-\varepsilon,\varepsilon) \to \mathbb{H}^n $ such that  $ \tilde{\alpha}(0)=\tilde{x} $ and $ \tilde{\alpha}'(0)=\tilde{v}. $ Thus    
		\begin{equation}\label{derivadadou}
		\tilde{u}(\tilde{\alpha}(t))=u(\alpha(t))+w(\tilde{\alpha}(t))
		\end{equation}
		where $\alpha (t)=\varphi_{w(\tilde{\alpha}(t))}(\tilde{\alpha}(t)).$ Using the chain rule we obtain  
$$\begin{array}{ll}
			\alpha'(t)&=(w\circ \tilde{\alpha})'(t)\frac{d}{dt}\varphi_{w(\tilde{\alpha}(t))}\tilde{\alpha}(t) +D_2\varphi_{w(\tilde{\alpha}(t))}\tilde{\alpha}(t)\cdot \tilde{\alpha}'(t) \vspace{.3cm} \\
          &=(w \circ\tilde{\alpha})'(t) X(\alpha(t)) + D_2\varphi_{w(\tilde{\alpha}(t))}\tilde{\alpha}(t)\cdot \tilde{\alpha}'(t) 
		\end{array}$$    

		and for $t=0,$ 
		\begin{equation}\label{aderivada}
		\alpha'(0)=\langle \grad w(\tilde{x}), \tilde{v} \rangle_{\tilde{x}} X(x)+D_2\varphi_{w(\tilde{x})}(\tilde{x})\cdot \tilde{v}.
		\end{equation}    
		Using \eqref{derivadadou} and  \eqref{aderivada} we obtain 
		$$\begin{array}{ll}
			\langle \grad \tilde{u}(\tilde{x}), \tilde{v} \rangle_{\tilde{x}}  &= \langle \grad u(x), \alpha'(0) \rangle_x + \langle \grad w (\tilde{x}) , \tilde{\alpha}'(0) \rangle_{\tilde{x}} \vspace{.3cm} \\
			&= \langle \grad u(x), \langle \grad w(\tilde{x}), \tilde{v} \rangle_{\tilde{x}} X(x)\rangle_x\vspace{.3cm}  \\
			&+ \langle \grad u(x), D_2\varphi_{w(\tilde{x})}(\tilde{x})\cdot \tilde{v} \rangle_x + \langle \grad w(\tilde{x}) , \tilde{v} \rangle_{\tilde{x}}.%  \\
		\end{array}$$
		
    Since $D_2\varphi_{w(\tilde{x})}(\tilde{x})\cdot \tilde{v} \in \{X(x)\}^\perp =T_{x}\varphi_{w(\tilde{x})}\H^n \subset T_x\H^{n+1},$ $$\langle \grad u(x), D_2\varphi_{w(\tilde{x})}(\tilde{x})\cdot \tilde{v} \rangle_x=\langle \pi(\grad u(x)), D_2\varphi_{w(\tilde{x})}(\tilde{x})\cdot \tilde{v} \rangle_x,$$  $$\pi: T_x\H^{n+1} \to T_{x}\H^n$$ is the projection  
$$\pi(y)=y-\langle y, X( x)\rangle_x	\frac{X(x)}{|X(x)|^2}.$$ Moreover, since $X$ is a Killing vector field, $D_2\varphi_{w(\tilde{x})}(\tilde{x}): T_{\tilde{x}}{\H^n} \to T_{ \varphi_{w(\tilde{x})}(\tilde{x})}\H^n$ is an isometry and 
		$$\langle \pi(\grad u(x)), D_2\varphi_{w(\tilde{x})}(\tilde{x})\cdot \tilde{v} \rangle_x= \langle [D_2\varphi_{w(\tilde{x})}(\tilde{x})]^{-1}\cdot \pi(\grad u(x)),  \tilde{v} \rangle_x.$$
Hence, 		$$\begin{array}{ll}
           \langle \grad \tilde{u}(\tilde{x}), \tilde{v} \rangle_{\tilde{x}} &=
\langle\langle \grad u(x),X(x) \rangle_x  \grad w(\tilde{x}), \tilde{v} \rangle_{\tilde{x}} \vspace{.3cm} \\
			&+ \langle D_{2}^{-1}(\varphi_{w(\tilde{x})}(\tilde{x}))\pi (\grad u(x)), \tilde{v} \rangle_{\tilde{x}} 
			+ \langle \grad w(\tilde{x}), \tilde{v} \rangle_{\tilde{x}}.
		\end{array}$$
		
	\end{proof}

\begin{remark}\label{rmk-w}
We may compute $w$ following the  steps from Section \ref{sec-rot} adapted %with \rho=0 and f'(0)=0
 and obtain $$w(\tilde{x})= -\int_{r(\tilde{x})}^\infty \frac{H\tanh(t)}{\cosh(t)\sqrt{1-H^2\tanh^2(t)}}dt$$ which is a negative increasing function that goes to zero as $r(\tilde{x})$ goes to infinity.
 
 Besides, $|w(\tilde{o})|$ is an incresing function of $H$ that diverges as $H \to 1.$ 
\end{remark}	
	
 Since $w$ is a bounded $C^\infty$ function, the following holds

	\begin{corollary}\label{cor-uew}
		Under the conditions of the above proposition we have:
		\begin{enumerate}
			\item [(i)] If $ r (\tilde {x}) \to \infty, $ 
			then $ |\grad \tilde{u} (\tilde{x}) | \to 0 $ if and only if $ | \grad u(x) | \to 0 $
			
			\item [(ii)] $ | \grad \tilde {u} (\tilde {x}) | \to + \infty $ if and only if $ | \grad u (x) | \to + \infty. $
		\end{enumerate}
	\end{corollary}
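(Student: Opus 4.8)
The plan is to read both equivalences off the gradient formula in Proposition \ref{graphundergraph}, using only two inputs: that $[D_2\varphi_{w(\tilde x)}(\tilde x)]^{-1}$ is a linear isometry (as shown in that proof), and that, by Remark \ref{rmk-w}, $w$ is a bounded radial function whose gradient $\grad w=w'(r)\grad r$ decays at infinity. Set $x=\varphi_{w(\tilde x)}(\tilde x)$ and decompose $\grad u(x)$ orthogonally into its $X(x)$-component and its projection $\pi(\grad u(x))$; write $P=|\pi(\grad u(x))|$ and $A=|\langle\grad u(x),X(x)\rangle|/|X(x)|$, so that $|\grad u(x)|^2=P^2+A^2$. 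Since the flow of $X$ is by isometries, $|X(x)|=|X(\tilde x)|=\cosh(r(\tilde x))$, and the two summands of the formula have norms $A\kappa$ and $P$, where
\begin{equation*}
\kappa:=|X(x)|\,|\grad w(\tilde x)|=\frac{H\tanh r(\tilde x)}{\sqrt{1-H^2\tanh^2 r(\tilde x)}}
\end{equation*}
(using $\coth(2t)-\mathrm{cosech}\,(2t)=\tanh t$ to read $w'$ off Remark \ref{rmk-w}); in particular $\kappa$ is bounded on $\H^2$ and increases to $H/\sqrt{1-H^2}$ as $r(\tilde x)\to\infty$.

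From here the triangle inequality gives $|\grad\tilde u|\le P+A\kappa\le(1+\kappa)\,|\grad u|$, which already proves the implication $|\grad u|\to0\Rightarrow|\grad\tilde u|\to0$ in (i) and, by contraposition, the implication $|\grad\tilde u|\to\infty\Rightarrow|\grad u|\to\infty$ in (ii).

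For the converse in (ii) I would avoid the cross term altogether and argue geometrically: $Gr_X(u)=Gr_X(\tilde u)$ is a single surface $S$, and for a Killing graph over any base the gradient blows up exactly when $S$ becomes tangent to the Killing direction, i.e. when the angle function $\langle N,X\rangle$ of the unit normal $N$ of $S$ tends to $0$; as neither $N$ nor $X$ refers to the base, $|\grad u(x)|\to\infty$ and $|\grad\tilde u(\tilde x)|\to\infty$ are both equivalent to $\langle N,X\rangle\to0$ at the common point of $S$, which settles (ii). For the converse in (i) I would exploit the decay of $\grad w$: from $[D_2\varphi]^{-1}\pi(\grad u)=\grad\tilde u-\langle\grad u(x),X(x)\rangle\grad w$, taking norms and using the isometry property gives $P\le|\grad\tilde u|+A\kappa$, while the vanishing of $\grad w$ forces the normal of $E_H=Gr_X(w)$ to tend to $X/|X|$, so the angle $\theta$ between $X$ and $T_xE_H$ tends to $\pi/2$. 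Writing $A\le P\cot\theta$ (valid since every vector of $T_xE_H$ makes angle $\ge\theta$ with $X$), this yields $P\,(1-\kappa\cot\theta)\le|\grad\tilde u|$ and $|\grad u|\le P/\sin\theta$; as $\cot\theta\to0$ and $\sin\theta\to1$, we conclude $|\grad\tilde u|\to0\Rightarrow P\to0\Rightarrow|\grad u|\to0$.

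The main obstacle is precisely this control of the cross term $\langle\grad u,X\rangle\grad w$ in the converse directions: a priori it can cancel the isometric term $\pi(\grad u)$, which happens exactly when $\grad u$ is nearly tangent to $X$. Ruling this out is not mere norm bookkeeping; it requires the genuinely geometric fact that the decay of $\grad w$ from Remark \ref{rmk-w} makes $E_H$ asymptotically orthogonal to $X$ (its normal tending to $X/|X|$) as $r(\tilde x)\to\infty$, so that $\theta\to\pi/2$ and $\pi(\grad u)$ asymptotically recovers the full norm $|\grad u|$. Establishing this asymptotic orthogonality is, to my mind, the real content of part (i), whereas (ii) is clean once one notices that tangency to $X$ is a base-independent condition.
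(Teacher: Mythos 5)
Your reduction to the formula of Proposition \ref{graphundergraph} is the right starting point, and your first paragraph is correct: with $\kappa=|X|\,|\grad w|=H\tanh r/\sqrt{1-H^2\tanh^2 r}$ you get $|\grad\tilde u|\le(1+\kappa)|\grad u|$, which gives one direction of (i) and one of (ii). But the decisive step --- the converse of (i) --- rests on a false geometric claim, and one that contradicts your own computation. You assert that the decay of $\grad w$ forces the normal of $E_H=Gr_X(w)$ to tend to $X/|X|$, so that $\theta\to\pi/2$, $\cot\theta\to0$. The tilt of a Killing graph is governed not by $|\grad w|$ but by the product $|X|\,|\grad w|=\kappa$: the angle function of $E_H$ is $\la N,X/|X|\ra=(1+\kappa^2)^{-1/2}$, and you showed $\kappa\to H/\sqrt{1-H^2}>0$. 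Hence the normal of $E_H$ does \emph{not} tend to $X/|X|$; rather $\cos\theta\to H$, i.e. $\cot\theta\to H/\sqrt{1-H^2}$. With the correct limits your chain $P(1-\kappa\cot\theta)\le|\grad\tilde u|$ has $\kappa\cot\theta\to H^2/(1-H^2)$, which is $<1$ only for $H<1/\sqrt2$; for $H\ge1/\sqrt2$ the inequality is vacuous and the converse of (i) is unproved. The danger you flagged is real: taking $Gr_X(u)=\varphi_c(\H^2)$, i.e. $\tilde u\equiv c$, the cross term cancels $\pi(\grad u)$ \emph{exactly} ($\grad\tilde u\equiv0$ while $\grad u\ne0$), so no term-by-term norm estimate can close the argument. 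Your proof of (ii) is also looser than it looks: since $|X|=\cosh r\to\infty$, ``tangency to $X$'' is not base-independent at infinity --- over $\H^2$ one has $\la N,X/|X|\ra=(1+\cosh^2 r\,|\grad\tilde u|^2)^{-1/2}$, which tends to $0$ even for bounded nonzero gradient; it is the unnormalized pairing $\la N,X\ra$ that detects gradient blow-up, and verifying this over the base $E_H$ requires a computation using that $E_H$ is uniformly transverse to $X$, not just the remark that $N$ and $X$ do not mention the base.

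The repair is to treat the two terms of the gradient formula jointly rather than separately: they are exactly $d(u\circ\Phi)+dw$, where $\Phi(\tilde x)=\varphi_{w(\tilde x)}(\tilde x)$ is the diffeomorphism $\H^2\to E_H$ with differential $d\Phi(v)=\la\grad w,v\ra X+D_2\varphi_{w(\tilde x)}(v)$ (your equation \eqref{aderivada}). Since $D_2\varphi_{w(\tilde x)}$ is an isometry with image orthogonal to $X$, one gets $|v|^2\le|d\Phi(v)|^2=|v|^2+\la\grad w,v\ra^2|X|^2\le(1+\kappa^2)|v|^2$, so all singular values of $d\Phi$ lie in $[1,(1-H^2)^{-1/2}]$. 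Consequently $|\grad u|\le|\Phi^*du|\le(1-H^2)^{-1/2}|\grad u|$ and $\big|\,|\grad\tilde u|-|\Phi^*du|\,\big|\le|\grad w|$; since $|\grad w|$ is globally bounded and tends to $0$ as $r(\tilde x)\to\infty$, all four implications of (i) and (ii) follow at once, for every $H\in(0,1)$. (In the cancellation example above this is consistent: there $|\grad u|\to0$ anyway.) For comparison, the paper offers no argument at all --- it deduces the corollary in one line from $w$ being a bounded $C^\infty$ (radial) function, with Remark \ref{rmk-w} implicitly supplying the decay of $\grad w$ --- so your proposal deserves credit for identifying the cross term as the genuine content, but the mechanism you invoke to control it (asymptotic orthogonality of $E_H$ to $X$) is false, and the gap is real for $H\ge1/\sqrt2$.
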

	
The next Lemma 	presents the family of functions $f_R$ whose graphs are $X-$trans\-lations of the graphs of $v_R.$

	\begin{lemma}\label{lem-radiaisHneq0}
	Given $R>0,$ let $D_R$ be the ball of radius $R$ centered at the origin of $E_H.$ Then there is a function $f_R : E_H \backslash D_R \to \R$ satisfying:
\begin{enumerate}
\item[(i)] $f_R$ vanishes on $\partial D_R;$
\item[(ii)] $f_R$ has CMC $H$ $X-$graph which is tangent to the $X-$Killing cyllinder that passes through $\partial D_R;$
\item[(iii)] $f_R$ is bounded by $B(H);$
\item[(iv)]\label{it4} The gradient of $f_R$ at $x$ goes to zero, as $x$ goes to infinity. 
\item[(v)] $f_R$ depends only on the distance to the origin of $E_H.$
\end{enumerate}
\end{lemma}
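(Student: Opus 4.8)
The plan is to reuse the rotational barriers $v_{\rho,\infty,H}$ already produced in Section \ref{sec-rot} and merely slide them along the Killing flow so that their boundary circles land on $E_H$ rather than on $\H^2$. The point is that $\varphi_t$ is a one-parameter group of isometries, so applying a fixed $\varphi_c$ to the CMC $H$ surface $Gr_X(v_{\rho,\infty,H})$ yields another CMC $H$ surface, and the $X$-Killing cylinder over $\partial D_\rho$ is invariant under $\varphi_t$ (each fiber is mapped into itself), so tangency to it is preserved. Thus essentially all the geometric content is inherited for free, and only the bookkeeping relating the two base hypersurfaces has to be checked.

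Concretely, I would first recall from Remark \ref{rmk-w} that $E_H=Gr_X(w)$ for a rotational function $w=w(r)$, and fix, for the given $R$, the radius $\rho$ for which the Killing cylinder through $\partial D_\rho\subset\H^2$ meets $E_H$ exactly along $\partial D_R$; since the graph map $\tilde{x}\mapsto\varphi_{w(\tilde{x})}(\tilde{x})$ is a rotationally equivariant diffeomorphism, this intersection is a circle centered at the origin of $E_H$ and the assignment $\rho\mapsto R$ is a continuous increasing bijection of $(0,\infty)$, so $\rho=\rho(R)$ is well defined. I would then define $f_R$ as the function whose graph is $\varphi_{w(\rho)}\bigl(Gr_X(v_{\rho,\infty,H})\bigr)$; tracking a point along its Killing fiber gives the closed expression
$$f_R(x)=w(\rho)+v_{\rho,\infty,H}(r)-w(r),\qquad x=\varphi_{w(r)}(\tilde{x}),\ r=r(\tilde{x}).$$
Because every Killing fiber meets both $Gr_X(v_{\rho,\infty,H})$ and $E_H$ exactly once, the translate is again a graph over $E_H\setminus D_R$, so $f_R$ is well defined. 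Properties (i), (ii) and (v) are then immediate: at $r=\rho$ the formula gives $f_R=v_{\rho,\infty,H}(\rho)=0$; CMC $H$ and tangency to the cylinder through $\partial D_R$ follow from the isometry- and cylinder-invariance noted above; and $f_R$ depends on $r$ alone, hence only on the distance to the origin of $E_H$.

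For (iii), since $w$ is increasing (Remark \ref{rmk-w}) one has $w(\rho)-w(r)\le 0$ for $r\ge\rho$, whence $f_R\le v_{\rho,\infty,H}(r)\le B(H)$ by Lemma \ref{lem-radiais}. For (iv), I note that the same surface is also the graph over $\H^2$ of $\tilde{f}_R(\tilde{x})=w(\rho)+v_{\rho,\infty,H}(r(\tilde{x}))$, which differs from $v_{\rho,\infty,H}$ by a constant and hence has $|\grad\tilde{f}_R|=|\grad v_{\rho,\infty,H}|\to 0$ by Lemma \ref{lem-grdtendezero}; the pair $(\tilde{f}_R,f_R)$ satisfies the hypotheses of Proposition \ref{graphundergraph}, so Corollary \ref{cor-uew}(i) transfers the decay to $|\grad f_R|\to 0$. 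The main obstacle I anticipate is not any single estimate but the geometric bookkeeping in the middle step: verifying that the correct translation amount is exactly $w(\rho)$, that the translated surface remains a graph over $E_H$ with boundary precisely $\partial D_R$, and that the correspondence $\rho\leftrightarrow R$ is a bijection — once this is pinned down, the five properties reduce to the short arguments above.
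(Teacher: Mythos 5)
Your construction coincides, step for step, with the paper's own proof: the paper likewise pulls everything back to the totally geodesic $\H^2$ via $w$ and its ``inverse'' $\hat{w}:E_H\to\R$ (with $\hat{w}(x)=-w(\tilde{x})$ for $x=\varphi_{w(\tilde{x})}(\tilde{x})$), associates to $D_R\subset E_H$ the disk $D_{\tilde{R}}\subset\H^2$ cut out by the same Killing cylinder, and defines $f_R(x)=v_{\tilde{R},\infty,H}(\varphi_{\hat{w}(x)}(x))-[\hat{w}(R)-\hat{w}(x)]$, which is exactly your formula $f_R(x)=w(\rho)+v_{\rho,\infty,H}(r)-w(r)$ with $\rho=\tilde{R}$. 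Items (i), (ii), (iv), (v) are then handled identically: the paper also observes that $Gr_X(f_R)$ is the graph over $\H^2$ of $v_{\tilde{R},\infty,H}+w(\tilde{R})$, gets (iv) from Corollary \ref{cor-uew}(i) and Lemma \ref{lem-grdtendezero}, and (v) from radiality; your cylinder-invariance argument for the tangency in (ii) is an equivalent substitute for the paper's appeal to Corollary \ref{cor-uew}(ii).

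The one place you fall short of the paper is item (iii). From $w(\rho)-w(r)\le 0$ you obtain only the one-sided bound $f_R\le B(H)$, whereas the paper proves $|f_R|\le B(H)$: it writes $f_R=v_1-v_2$ with $v_1=v_{\tilde{R},\infty,H}(\varphi_{\hat{w}(\cdot)}(\cdot))$ and $v_2=\hat{w}(R)-\hat{w}(\cdot)$, and shows $0\le v_2\le\hat{w}(o)\le B(H)$. The lower bound is not automatic from your argument: by Remark \ref{rmk-w} the quantity $|w(\tilde{o})|$ diverges as $H\to 1^-$, so the subtracted term $\hat{w}(R)-\hat{w}(x)$ is not obviously dominated by $B(H)$, and the paper has to prove $\hat{w}(o)\le B(H)$ by comparing the integrands of the two defining integrals, which (since $x\mapsto x/\sqrt{1-x^2}$ is increasing) reduces to $H(\coth(2t)-{\rm cosech\,}(2t))\le H+(1-H)e^{-2t}$, i.e.\ to $\cosh(2t)-\sinh(2t)\le 1$. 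Your upper bound is in fact the only inequality invoked downstream (it is what bounds $T_m$ in the proof of Theorem \ref{teoHneq0}), but to establish (iii) as the paper states and proves it, you should supplement your argument with this short comparison so as to also get $f_R\ge -B(H)$.
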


\begin{proof}

Consider $\H^n$ the totally geodesic hypersurface that has the same asymptotic boundary as $E_H$ and let $w:\H^n \to \R$ be the function from Remark \ref{rmk-w} such that $E_H$ is the $X-$graph of $w$ (See Figure \ref{fig:proposicaogradiente}). We actually need the `inverse' of $w,$ i. e.,  let $\hat{w}: E_H \to \R$ be the function such that $\H^n$ is the $X-$graph of $\hat{w}.$ It holds that $$\hat{w}(x)=-w(\tilde{x}) \text{ for }x=\varphi_{w(\tilde{x})}(\tilde{x}).$$ Since $w$ ($\hat{w}$) is radial, we may define $w(\tilde{R})=w(\tilde{x})$ ($\hat{w}(R)=\hat{w}(x)$) for any $\tilde{x}\in \H^n$ ($x\in E_H$).

	Observe that $u: U \subset E_H \to \R$ has CMC $H$ $X-$graph if and only if 
	$$\begin{array}{l}
	\tilde{u}:  \tilde{U}=\{\varphi_{\hat{w}(x)}(x)\,|\, x\in U \}\subset \H^n \to \R\vspace{.3cm} \\
	 \tilde{u}(\tilde{x}) = u(\varphi_{w(\tilde{x})}(\tilde{x}))+w(\tilde{x})	
	\end{array}$$ satisfies $\mathcal{M}_{H}(\tilde{u})=0$ for $\mathcal{M}_{H}$ defined in \eqref{operador}. Besides, given a ball $D_R(o) \subset E_H,$ there is an associated ball $D_{\tilde{R}}(\tilde{o}) \subset \H^n$ which is the intersection of the $X-$Killing cyllinder though $D_R(o)$ with $\H^n.$ 
	
	For any $R>0,$ consider the function $v_{\tilde{R}, H}: \H^n\backslash D_{\tilde{R}}\to \R$ from Proposition \ref{rad-sim} and define $f_R: E_H \backslash D_R \to \R$ by $$f_R(x)= %	 v_{\tilde{R}, H}(\tilde{x})+[w(\tilde{R})-w(\tilde{x})]=	
	 v_{\tilde{R}, H}(\varphi_{\hat{w}(x)}(x))-[\hat{w}(R)-\hat{w}(x)].$$
	
The graph of $f_R$ has CMC $H$ because it is the graph of $$x\in \H^n\backslash D_R \mapsto v_{\tilde{R}, H}(x)+w(\tilde{R}).$$ Its boundary is $\partial D_R \subset E_H$ and it is tangent to the Killing cyllinder $\varphi_{\R}(D_R)$ because of the second item in Corollary \ref{cor-uew}. Besides, $f_R$ depends only on the distance to $o \in E_H$ since it is the difference between two other radial functions: $v_1=v_{\tilde{R}, H}(\varphi_{\hat{w}(\cdot)}(\cdot))$ and $v_2=\hat{w}(R)-\hat{w}(\cdot).$

In order to see that $|f_R| \leq B(H),$ it is sufficient to prove that $$0\leq v_1 \leq B(H) \text{ and }0\leq v_2 \leq B(H).$$

The first inequality is demonstrated in Lemma \ref{lem-radiais}. For the second one, notice that $r \mapsto \hat{w}(r)$ is a positive decreasing function. Therefore,
$\hat{w}(R)-\hat{w}(r)\leq \hat{w} (o)$
and it remains to show that
$\hat{w} (o) \leq B(H),$ which reads as
\begin{eqnarray*}
\int_{0}^\infty \frac{H\tanh(t)}{\cosh(t)\sqrt{1-H^2\tanh^2(t)}}dt\\
\leq \int_{0}^{+\infty}\frac{H+(1-H)e^{-2t}}{\cosh(t)\sqrt{1-(H+(1-H)e^{-2t})^2}}dt.\end{eqnarray*}
We may compare the integrands and see that this inequality holds. For that, since 
 $ x\mapsto \frac{x}{\sqrt{1-x^2}} $ is an increasing function, we only notice that $$H\tanh(t) \leq H+(1-H)e^{-2t} \text{ for }t\geq 0.$$%The last inequality follows from $\tanh(t) \leq 1.$

Item (iv) %\ref{it4}
 is a consequence of the first claim in Corollary \ref{cor-uew}.
\end{proof}

The following theorem has the precise statement for Theorem \ref{teoHneq0-intro}.	
	
	\begin{theorem}\label{teoHneq0}
		Let $ E_H \subset \mathbb{H}^{n+1} $ be an umbilical hypersurface with CMC $ H \in [0,1).$ Consider $ \Omega \subset E_H $ an exterior domain of class $ C^0 $ and $ \gamma $ an oriented geodesic passing orthogonally through $ E_H, $ where $ X $ is the hyperbolic Killing field tangent to $ \gamma $ satisfying $ \langle X, \eta \rangle > 0, $ where $ \eta $ is the normal vector to $ E_H. $ Then, given a real number $ s \geq 0 $ there is a nonnegative bounded  function $ u  \in C^{ \infty} (\Omega) \cap C^0 (\overline{\Omega}) $ such that the $ {\rm Gr}(u) $ has constant mean curvature $ H, $ $ u |_{\partial \Omega} = 0,$  $ \displaystyle \limsup_{x \rightarrow \partial \Omega } | \grad u(x) | = s$ and $\sup_{\Omega} u\leq B(H).$
	\end{theorem}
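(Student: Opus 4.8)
The plan is to run the continuity scheme from the proof of Theorem~\ref{teo-H=0} essentially verbatim, with three substitutions: the operator $\mathcal{M}_0$ is replaced by $\mathcal{M}_H$ from \eqref{operador}, the rotational barriers $v_R$ are replaced by the family $f_R$ produced in Lemma~\ref{lem-radiaisHneq0}, and the height bound $\pi/4$ is replaced by $B(H)$ from \eqref{eq-BH}. I would first reduce to the case of a $C^\infty$ exterior domain $\Omega\subset E_H$; the passage to a merely $C^0$ domain is carried out at the end exactly as in Theorem~\ref{teo-H=0}, by exhausting $\Omega$ from outside by smooth domains and combining the Arzel\`a--Ascoli theorem with interior regularity. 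The bridge to the available analytic tools is Proposition~\ref{graphundergraph}: a function $u$ on a domain of $E_H$ has CMC~$H$ $X$-graph if and only if its associated function $\tilde u$ on $\H^2$ solves $\mathcal{M}_H(\tilde u)=0$, the correspondence being $x\mapsto\tilde x=\varphi_{\hat w(x)}(x)$ with $\hat w$, $w$ the bounded radial functions of Remark~\ref{rmk-w} and Lemma~\ref{lem-radiaisHneq0}. Since this correspondence is a smooth diffeomorphism of bounded geometry on compact sets and $w$ is bounded, every solvability, comparison and compactness statement may be read on $\H^2$; I would, however, keep the gradient constraint measured in the metric of $E_H$ throughout, transferring it to $\H^2$ only up to controlled constants when an a priori estimate is needed.

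Next I would set up the exhaustion. Let $D_0\subset E_H$ be the smallest disk centered at $o$ containing $E_H\setminus\Omega$, and use item~(iv) of Lemma~\ref{lem-radiaisHneq0} (the gradient of $f_{R_0}$ decays to $0$ at infinity) to fix $R_1>R_0$ with $|\grad f_{R_0}|<s/2$ on $\partial D_R$ for all $R\ge R_1$. With $R_m=R_1+m$, $\Omega_m=D_{R_m}\cap\Omega$ and $\Gamma_m=\partial D_{R_m}$, define
\[
T_m=\bigl\{\,t\ge0 \;\big|\; \exists\,u_t\in C^\infty(\overline{\Omega_m}),\ \mathcal{M}_H(u_t)=0,\ u_t|_{\partial\Omega}=0,\ u_t|_{\Gamma_m}=t,\ \sup_{\partial\Omega}|\grad u_t|\le s\,\bigr\},
\]
the gradient being the one of $E_H$. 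As before $0\in T_m$. The two-sided barrier argument then bounds $|\grad u_t|$ on $\Gamma_m$ by $s/2$ for $t\in T_m$, so that $\sup_{\partial\Omega_m}|\grad u_t|\le s$; here the only change from the minimal case is that the constant $t$ now satisfies $\mathcal{M}_H(t)=-2H\le0$, i.e.\ it is a \emph{supersolution}, which is exactly what makes it a legitimate upper barrier precisely when $H\ge0$, while the shifted solution $f_{R_0}-(f_{R_0}(R_m)+t)$ provides the lower barrier.

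The heart of the argument is that $h(m)=\sup T_m$ is finite, attained, and yields the gradient $s$. Finiteness, with $h(m)\le B(H)$, follows from the first-contact argument: all barriers $f_R$, $R\in[R_0,R_m]$, are bounded by $B(H)$ (item~(iii)), so a value $t>B(H)$ in $T_m$ would force the graph of $u_t$ to touch some $f_{\bar R}$ at an interior point, which the comparison principle for $\mathcal{M}_H$ excludes both on the cylinder over $\Gamma_m$ (where $u_t>B(H)\ge f_{\bar R}$) and on the cylinder over $\partial\Omega$ (which would violate $\sup_{\partial\Omega}|\grad u_t|\le s$). That $h(m)\in T_m$ is obtained by taking $x_n\to h(m)$ in $T_m$, bounding $0\le u_{x_n}\le h(m)$ by the maximum principle, invoking Lemma~\ref{lem-gradest} on the image domain in $\H^2$ for uniform $C^1$ bounds (transferred back to $E_H$ through the smooth bounded diffeomorphism), and passing to a $C^2$-convergent subsequence by standard PDE theory. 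The equality $\sup_{\partial\Omega}|\grad u_{h(m)}|=s$ comes from the implicit-function-theorem argument with the $C^1$ operator $T(t,w)=\mathcal{M}_H(w+t\varphi)$: a strict inequality would let one raise the boundary height slightly while keeping the gradient below $s$, contradicting the maximality of $h(m)$.

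Finally I would extract, by the diagonal method over the exhaustion $\Omega_k$, a solution $u$ of $\mathcal{M}_H(u)=0$ on $\Omega$ with $u=0$ on $\partial\Omega$, $0\le u\le B(H)$ and $\limsup_{x\to\partial\Omega}|\grad u|=s$, the bound $B(H)$ being precisely item~(iii) of Lemma~\ref{lem-radiaisHneq0}. I expect the main obstacle to be the maximum-principle bookkeeping for $H\neq0$: one must check that two CMC-$H$ graphs carrying the orientation fixed by $\langle X,\eta\rangle>0$ obey the comparison principle in the direction required by the first-contact argument, and that the gradient condition, naturally posed on $E_H$, transfers to the $\H^2$-picture (where Lemma~\ref{lem-gradest} lives) without affecting the prescribed value $s$. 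Both points are controlled because $w$ is bounded and the change of variables is a diffeomorphism of bounded geometry on compacta, but this orientation-and-transfer bookkeeping is where the genuine content beyond the $H=0$ case resides.
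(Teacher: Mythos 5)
Your proposal is correct and follows essentially the same route as the paper: the continuity scheme of Theorem~\ref{teo-H=0} with $\mathcal{M}_H$ in place of $\mathcal{M}_0$, the barriers $f_R$ of Lemma~\ref{lem-radiaisHneq0} in place of $v_R$ (including the choice of $R_1$ via gradient decay and the two-sided barriers $t$ and $f_{R_0}-(f_{R_0}(R_m)+t)$), the bound $B(H)$ via first contact, the transfer of Lemma~\ref{lem-gradest} through Proposition~\ref{graphundergraph}, the implicit-function-theorem step, and the diagonal/$C^0$ approximation at the end. The one cosmetic difference is that you treat the constant $t$ as a supersolution in the $\H^2$ picture ($\mathcal{M}_H(t)=-2H\le 0$), whereas on $E_H$ it is an exact solution (its graph is the isometric translate $\varphi_t(E_H)$); both yield the same comparison.
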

	
	\begin{proof}
	We start fixing $ s \geq 0 $ and assuming $ \Omega \subset E_H $ is a $ C^{\infty} $ exterior domain. Let $D_{0},...,D_{m}, ...$ be a collection of balls in $ E_{H}$ centered at the origin $o = \gamma \cap E_{H},$ with radius $ R_m, $ satisfying the condition that $ D_0 $ is the smallest ball such that $E_H\backslash \Omega \subset D_{0},$ $$D_{i} \subset D_{i+1},\,\,  E_H =\displaystyle\cup_{i \in  \mathbb{N}}D_{i}$$ and $R_1$ is large enough so that \eqref{eq-condR1} (see below) holds.

As in Theorem \ref{teo-H=0}, let  $ \Omega_{m} = D_{m} \cap \Omega,$ set $\Gamma_{m}= \partial D_{m}$  and consider the set
\begin{equation*} T_{m} = \{ t \geq 0 \;|\;\exists u_{t} \in C^{\infty}\left(\overline{\Omega_{m}}\right) \text{solution of PD$^H_m(t)$}\},
\end{equation*}
where $PD^H_m(t)$ has the same boundary conditions as $PD_m(t)$ and the PDE corresponds to ask that $u_t$ has CMC $H$ $X-$graph.

Next we prove the following items: 
		\begin{enumerate} 
			\item[1)] $T_{m}$ is not empty
			\item[2)] $T_{m}$ is bounded
			\item[3)] $h(m)=\sup{T_{m}}$ belongs to $T_{m}$
			\item[4)]  $\displaystyle \limsup_{x \to \partial \Omega} | \grad u_{h(m)}| = s$.
		\end{enumerate}
Afterwards we will prove that $u_m=u_{h(m)} \in C^{\infty}(\Omega_m)$ has CMC $H$ $X-$graph and $\sup |\grad u_m | = s.$ In the end, the fact that $\Omega=\cup_{m}\Omega_m$ will allow us to find the solution $u_s.$

The first claim follows from taking $u=0,$ since $E_H$ has CMC $H.$ Therefore $t=0$ belongs to $T_m.$

For the remaining items we use the functions $f_R,$ which play the same role as the functions $v_R$ did in the proof of Theorem \ref{teo-H=0}. They were presented in Lemma \ref{lem-radiaisHneq0}.

For the second item, consider the family of graphs of $\{f_R\}_{R\leq R_m}.$ If $j>B(H)$ belongs to $T_m,$ the graph of the associated $u_j$ would have a first interior contact point with a member of this family, leading us to a contradiction. Hence $T_m$ is bounded by $B(H).$

In order to find the function $u_m=u_{h(m)}$, we prove next that $h(m) \in T_m.$ First, we exhibit the condition on $R_1:$ 
\begin{equation}\label{eq-condR1}
|\grad f_{R_0}(x)|_{\partial D_R}\leq s/2 \, \text{for} \, R\geq R_1,
\end{equation} 
which is possible since the gradient of $f_R$ at $x$ goes to zero, as $x$ goes to infinity. Observe that our choice of $R_1$ guarantees that, for all $m,$ if $t\in T_m,$ then $$|\grad u_t|\leq s/2 \text{ on } \Gamma_m.$$ This is again a consequence of the maximum principle as in the proof of Theorem \ref{teo-H=0}. The functions constant $t,$ $u_t$ and $f_{R_0}-(f_{R_0}(R_m)+t)$ satisfy $$f_{R_0}-(f_{R_0}(R_m)+t)\le u_t\le t \text{ in }\Omega_m.$$ Hence $$|\grad u_t|\leq |\grad f_{R_0}(R_m)|<s/2 \text{ on }\Gamma_m.$$ Therefore for $t\in T_m,$ $\sup_{\partial\Omega_m} |\grad u_{t}|\leq s.$ %
		
We conclude that $h(m)\in T_m$ following the same ideas as in the proof of Theorem \ref{teo-H=0}. We remark that the boundedness for $\grad u_t$ obtained using Lemma \ref{lem-gradest} also holds here since Proposition \ref{graphundergraph} gives a nice bijection from functions defined in $E_H$ to functions defined in $\mathbb{H}^n.$

The fourth property of $T_m,$  $$\displaystyle \limsup_{x \to \partial \Omega} | \grad u_{h(m)}| = s,$$ also holds by the argument presented in Theorem \ref{teo-H=0}. 

Finally, we consider the sequence $u_m=u_{h(m)}$ and conclude that, up to a subsequece, it converges to a function $u_s$ satisfying the statement.

The general case of $C^0$ domains also holds by the same argument as in Theorem \ref{teo-H=0}.

	\end{proof}

	\addcontentsline{toc}{chapter}{Refer\^encias Bibliogr\'aficas}

	\bigskip
	
Patricia Klaser: Universidade Federal de Santa Maria, 

 patricia.klaser@ufsm.br.

Adilson Nunes: Universidade Federal do Rio Grande,

 adilson.nunes@furg.br.

Jaime Ripoll: Universidade Federal do Rio Grande do Sul,

 jaime.ripoll@ufrgs.br

\end{document}